\documentclass[11pt]{mathscan}    
\usepackage{latexsym, amsmath, amsthm, amssymb, setspace,verbatim}
\usepackage{hyperref}
\usepackage{enumitem}

\title[Diagonal quasi-free automorphisms of Cuntz-Krieger algebras]{On diagonal quasi-free automorphisms of simple Cuntz-Krieger algebras}
\author{Sel\c{c}uk Barlak}
\author{G{\'a}bor Szab{\'o}}

\address{Europa-Universit\"at Flensburg\\
Institut f\"ur mathematische, naturwissenschaftliche und technische Bildung\\
Abteilung f\"ur Mathematik und ihre Didaktik\\
Auf dem Campus 1b\\
DE-24943 Flensburg\\
Germany}
\email{selcuk.barlak@uni-flensburg.de}

\address{Department of Mathematical Sciences\\ 
University of Copenhagen\\
Universitetsparken 5\\
DK-2100 Copenhagen {\O}\\
Denmark}
\email{gabor.szabo@math.ku.dk}
\subjclass[2010]{Primary 46L40, 46L55}

\begin{document}

\newcommand\set[1]{\left\{#1\right\}}  

\newcommand{\IA}[0]{\mathbb{A}} \newcommand{\IB}[0]{\mathbb{B}}
\newcommand{\IC}[0]{\mathbb{C}} \newcommand{\ID}[0]{\mathbb{D}}
\newcommand{\IE}[0]{\mathbb{E}} \newcommand{\IF}[0]{\mathbb{F}}
\newcommand{\IG}[0]{\mathbb{G}} \newcommand{\IH}[0]{\mathbb{H}}
\newcommand{\II}[0]{\mathbb{I}} \renewcommand{\IJ}[0]{\mathbb{J}}
\newcommand{\IK}[0]{\mathbb{K}} \newcommand{\IL}[0]{\mathbb{L}}
\newcommand{\IM}[0]{\mathbb{M}} \newcommand{\IN}[0]{\mathbb{N}}
\newcommand{\IO}[0]{\mathbb{O}} \newcommand{\IP}[0]{\mathbb{P}}
\newcommand{\IQ}[0]{\mathbb{Q}} \newcommand{\IR}[0]{\mathbb{R}}
\newcommand{\IS}[0]{\mathbb{S}} \newcommand{\IT}[0]{\mathbb{T}}
\newcommand{\IU}[0]{\mathbb{U}} \newcommand{\IV}[0]{\mathbb{V}}
\newcommand{\IW}[0]{\mathbb{W}} \newcommand{\IX}[0]{\mathbb{X}}
\newcommand{\IY}[0]{\mathbb{Y}} \newcommand{\IZ}[0]{\mathbb{Z}}

\newcommand{\CA}[0]{\mathcal{A}} \newcommand{\CB}[0]{\mathcal{B}}
\newcommand{\CC}[0]{\mathcal{C}} \newcommand{\CD}[0]{\mathcal{D}}
\newcommand{\CE}[0]{\mathcal{E}} \newcommand{\CF}[0]{\mathcal{F}}
\newcommand{\CG}[0]{\mathcal{G}} \newcommand{\CH}[0]{\mathcal{H}}
\newcommand{\CI}[0]{\mathcal{I}} \newcommand{\CJ}[0]{\mathcal{J}}
\newcommand{\CK}[0]{\mathcal{K}} \newcommand{\CL}[0]{\mathcal{L}}
\newcommand{\CM}[0]{\mathcal{M}} \newcommand{\CN}[0]{\mathcal{N}}
\newcommand{\CO}[0]{\mathcal{O}} \newcommand{\CP}[0]{\mathcal{P}}
\newcommand{\CQ}[0]{\mathcal{Q}} \newcommand{\CR}[0]{\mathcal{R}}
\newcommand{\CS}[0]{\mathcal{S}} \newcommand{\CT}[0]{\mathcal{T}}
\newcommand{\CU}[0]{\mathcal{U}} \newcommand{\CV}[0]{\mathcal{V}}
\newcommand{\CW}[0]{\mathcal{W}} \newcommand{\CX}[0]{\mathcal{X}}
\newcommand{\CY}[0]{\mathcal{Y}} \newcommand{\CZ}[0]{\mathcal{Z}}

\newcommand{\FA}[0]{\mathfrak{A}} \newcommand{\FB}[0]{\mathfrak{B}}
\newcommand{\FC}[0]{\mathfrak{C}} \newcommand{\FD}[0]{\mathfrak{D}}
\newcommand{\FE}[0]{\mathfrak{E}} \newcommand{\FF}[0]{\mathfrak{F}}
\newcommand{\FG}[0]{\mathfrak{G}} \newcommand{\FH}[0]{\mathfrak{H}}
\newcommand{\FI}[0]{\mathfrak{I}} \newcommand{\FJ}[0]{\mathfrak{J}}
\newcommand{\FK}[0]{\mathfrak{K}} \newcommand{\FL}[0]{\mathfrak{L}}
\newcommand{\FM}[0]{\mathfrak{M}} \newcommand{\FN}[0]{\mathfrak{N}}
\newcommand{\FO}[0]{\mathfrak{O}} \newcommand{\FP}[0]{\mathfrak{P}}
\newcommand{\FQ}[0]{\mathfrak{Q}} \newcommand{\FR}[0]{\mathfrak{R}}
\newcommand{\FS}[0]{\mathfrak{S}} \newcommand{\FT}[0]{\mathfrak{T}}
\newcommand{\FU}[0]{\mathfrak{U}} \newcommand{\FV}[0]{\mathfrak{V}}
\newcommand{\FW}[0]{\mathfrak{W}} \newcommand{\FX}[0]{\mathfrak{X}}
\newcommand{\FY}[0]{\mathfrak{Y}} \newcommand{\FZ}[0]{\mathfrak{Z}}

\newcommand{\Fa}[0]{\mathfrak{a}} \newcommand{\Fb}[0]{\mathfrak{b}}
\newcommand{\Fc}[0]{\mathfrak{c}} \newcommand{\Fd}[0]{\mathfrak{d}}
\newcommand{\Fe}[0]{\mathfrak{e}} \newcommand{\Ff}[0]{\mathfrak{f}}
\newcommand{\Fg}[0]{\mathfrak{g}} \newcommand{\Fh}[0]{\mathfrak{h}}
\newcommand{\Fi}[0]{\mathfrak{i}} \newcommand{\Fj}[0]{\mathfrak{j}}
\newcommand{\Fk}[0]{\mathfrak{k}} \newcommand{\Fl}[0]{\mathfrak{l}}
\newcommand{\Fm}[0]{\mathfrak{m}} \newcommand{\Fn}[0]{\mathfrak{n}}
\newcommand{\Fo}[0]{\mathfrak{o}} \newcommand{\Fp}[0]{\mathfrak{p}}
\newcommand{\Fq}[0]{\mathfrak{q}} \newcommand{\Fr}[0]{\mathfrak{r}}
\newcommand{\Fs}[0]{\mathfrak{s}} \newcommand{\Ft}[0]{\mathfrak{t}}
\newcommand{\Fu}[0]{\mathfrak{u}} \newcommand{\Fv}[0]{\mathfrak{v}}
\newcommand{\Fw}[0]{\mathfrak{w}} \newcommand{\Fx}[0]{\mathfrak{x}}
\newcommand{\Fy}[0]{\mathfrak{y}} \newcommand{\Fz}[0]{\mathfrak{z}}

\renewcommand{\phi}[0]{\varphi}
\newcommand{\eps}[0]{\varepsilon}

\newcommand{\id}[0]{\operatorname{id}}		
\newcommand{\ad}[0]{\operatorname{Ad}}
\newcommand{\Aut}[0]{\operatorname{Aut}}
\newcommand{\dst}[0]{\displaystyle}
\newcommand{\cstar}[0]{\ensuremath{\mathrm{C}^*}}
\newcommand{\Ost}[0]{\CO_\infty^{\mathrm{st}}}
\newcommand{\linhull}{\operatorname{span}}

\newtheorem{satz}{Satz}[section]		
\newtheorem{cor}[satz]{Corollary}
\newtheorem{lemma}[satz]{Lemma}
\newtheorem{prop}[satz]{Proposition}
\newtheorem{theorem}[satz]{Theorem}
\newtheorem*{theoremoz}{Theorem}
\newtheorem*{coroz}{Corollary}

\theoremstyle{definition}
\newtheorem{defi}[satz]{Definition}
\newtheorem*{defioz}{Definition}
\newtheorem{defprop}[satz]{Definition \& Proposition}
\newtheorem{nota}[satz]{Notation}
\newtheorem*{notaoz}{Notation}
\newtheorem{rem}[satz]{Remark}
\newtheorem*{remoz}{Remark}
\newtheorem{example}[satz]{Example}
\newtheorem{question}[satz]{Question}
\newtheorem*{questionoz}{Question}

\numberwithin{equation}{section}
\renewcommand{\theequation}{e\thesection.\arabic{equation}}


\begin{abstract}
We show that an outer action of a finite abelian group on a simple Cuntz-Krieger algebra is strongly approximately inner in the sense of Izumi if the action is given by diagonal quasi-free automorphisms and the associated matrix is aperiodic. 
This is achieved by an approximate cohomology vanishing-type argument for the canonical shift restricted to the relative commutant of the set of domain projections of the canonical generating isometries in the fixed point algebra.
\end{abstract}

\maketitle


\section*{Introduction}

Ever since their inception, Cuntz-Krieger algebras \cite{CuntzKrieger1980} and graph \cstar-algebras \cite{KumjianPaskRaeburn1998} form natural classes of nuclear \cstar-algebras satisfying the UCT \cite{RosenbergSchochet1987}, which keep attracting much attention.
One compelling feature is that many of these \cstar-algebras' structural properties have natural characterizations in terms of their matrices and graphs, respectively.
This feature makes them particularly approachable from the point of view of classification, as is evidenced for example by R\o rdam's work \cite{Rordam1995} on simple Cuntz-Krieger algebras. 
More recently, the classification of unital graph \cstar-algebras in purely graph theoretic terms has been completed by Eilers-Restorff-Ruiz-S{\o}rensen; see \cite{EilersRestorffRuizSorensen2016}.

This paper deals with actions of finite abelian groups on Cuntz-Krieger algebras by quasi-free automorphisms in the sense of \cite{Zacharias2000} or \cite{Evans1980}.
This class of automorphisms contains the ones that permute the canonical generating partial isometries, as well as automorphisms for which each of these partial isometries is an eigenvector. 
The latter ones fall into the class of diagonal quasi-free automorphisms, as they fix all range projections of the generating partial isometries.

Every action by diagonal quasi-free automorphisms is conjugate to one that is given by automorphisms for which the canonical partial isometries are eigenvectors; see Proposition~\ref{prop:charac quasi-free diagonal autos}. 
In particular, diagonal quasi-free automorphisms are always homotopic to the identity map. 
Therefore they are approximately inner, provided the Cuntz-Krieger algebra is simple, in which case it must be a unital Kirchberg algebra; see \cite{Rordam1995a, Phillips2000}. 
This is in contrast to automorphisms induced by permutations on the set of the canonical generators. 
In general, such automorphisms act non-trivially on the $K_0$-group of the Cuntz-Krieger algebra, as it is generated by the classes of domain projections of the generating partial isometries; see \cite{Cuntz1981a}.

One might now ask the following question.

\begin{questionoz} \label{ques:intro}
Is every outer action of a finite abelian group by outer diagonal quasi-free automorphisms on a simple, purely infinite Cuntz-Krieger algebra strongly approximately inner in the sense of Izumi \cite{Izumi2004}?
\end{questionoz}

Strong approximate innerness requires that unitaries witnessing approximate innerness can be chosen to lie in the fixed point algebra of the action. If the unitaries can in addition be chosen to give rise to (approximate) group representations, then the action is called approximately representable. 
Although in general different, these notions coincide if the action absorbs the trivial action on a suitable UHF algebra; see \cite{Izumi2004} and \cite{Szabo2017}.

On the one hand, the key feature of these technically looking properties is the duality between approximate representability and the Rokhlin property. 
As illustrated beautifully by Izumi \cite{Izumi2004,Izumi2004a}, this relationship paired with the rigid nature of Rokhlin actions make approximately representable actions particularly accessible to classification via their duals.
For strongly approximately inner actions of cyclic groups of prime power order on $\CO_2$, this perspective was successfully exploited by Izumi, and was even accompanied by a complete range result.

On the other hand, models for actions with the Rokhlin property such as in \cite{BarlakSzabo2017} together with Takai duality \cite{Takai1975} can be used to infer models or structural properties for approximately representable actions.
Along these lines, a characterization of the UCT for the fixed point algebra associated to a $M_{n^\infty}$-absorbing action $\alpha: \IZ_n \curvearrowright A$ on a unital UCT Kirchberg algebra has recently been shown by the first author in joint work with Xin Li; see \cite{BarlakLi2017}. More concretely, $A^\alpha$ satisfies the UCT if and only if there exists an inverse semigroup of $\alpha$-homogeneous partial isometries $\CS \subset A$\footnote{This means that $\CS$ is closed under multiplication and the $*$-operation.} such that $\CS$ generates $A$ as a \cstar-algebra and the projections in $\CS$ generate a Cartan subalgebra of $A$ in the sense of Renault \cite{Renault2008}.

Restricted to Cuntz-Krieger algebras, our question above therefore raises the issue to which extend the latter condition already implies approximate representabilty.
Indeed, the canonical partial isometries of a Cuntz-Krieger algebra generate an inverse semigroup whose projections in turn generate an abelian \cstar-subalgebra.
It is a Cartan subalgebra precisely when the matrix satisfies Condition (I) (see \cite{Renault2008} and \cite{CuntzKrieger1980}), which is equivalent to its spectrum being the Cantor set.

Our main result asserts that the above question indeed has an affirmatively answer if the matrix is aperiodic, that is, if some power of the matrix has only strictly positive entries.

\begin{theoremoz} [see Corollary {\ref{cor:main result}}]
Let $n \geq 2$ be an integer. 
Let $A$ be an aperiodic $n \times n$-matrix with values in $\set{0,1}$.
Let $G$ be a finite abelian group and let $\sigma:G \curvearrowright \CO_A$ be an action by diagonal quasi-free automorphisms.
If $\sigma$ is outer, then it is strongly approximately inner.
\end{theoremoz}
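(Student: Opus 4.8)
The plan is to verify Izumi's strong approximate innerness directly, by producing for each fixed $g \in G$ and each $\eps > 0$ a single unitary $U \in \CO_A^\sigma$ with $\|\ad(U)(s_i) - \lambda_i(g)\, s_i\| < \eps$ for every canonical generator $s_i$. Since the $s_i$ generate $\CO_A$ and both $\ad(U)$ and $\sigma_g$ are $*$-homomorphisms, control on the generators upgrades to control on any prescribed finite set, and finiteness of $G$ lets me treat each group element separately. By Proposition~\ref{prop:charac quasi-free diagonal autos} I may assume $\sigma_g(s_i) = \lambda_i(g)\, s_i$ for characters $\lambda_i \colon G \to \IT$, so that the range projections $P_i := s_i s_i^*$ and domain projections $Q_i := s_i^* s_i$ are all $\sigma$-fixed. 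The whole construction will be carried out inside $\CB := \set{Q_1,\dots,Q_n}' \cap \CO_A^\sigma$, the relative commutant of the domain projections in the fixed point algebra, on which the canonical shift $\Phi(x) = \sum_i s_i x s_i^*$ restricts to a unital endomorphism (multiplicativity uses $x \in \set{Q_1,\dots,Q_n}'$, and $\Phi$ preserves both this commutant and $\CO_A^\sigma$). Working inside $\CB$ is precisely what forces the output to land in the fixed point algebra, which is the crux of \emph{strong} approximate innerness.

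The key reduction is to recast the problem as a cohomological equation for $\Phi$. Put $v_g := \sum_{i=1}^n \lambda_i(g)\, P_i \in \CB$, which is a unitary. Two elementary identities drive everything: first $v_g s_i = \lambda_i(g)\, s_i$, because $P_j s_i = \delta_{ij} s_i$; and second the commutation relation $s_i y = \Phi(y)\, s_i$ for all $y \in \set{Q_1,\dots,Q_n}'$, which follows from $\Phi(y) s_i = s_i y Q_i = s_i y$. Granting these, suppose $U \in \CB$ is a unitary solving the coboundary equation $U\,\Phi(U)^* = v_g$. Then
\[
\ad(U)(s_i) = U\,(s_i U^*) = U\,\Phi(U)^* s_i = v_g s_i = \lambda_i(g)\, s_i,
\]
so $U$ implements $\sigma_g$ exactly on the generators; if $U$ solves the equation only up to $\eps$ in norm, then $\|\ad(U)(s_i) - \lambda_i(g)\, s_i\| = \|(U\Phi(U)^* - v_g)\, s_i\| < \eps$. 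Thus the entire theorem reduces to the approximate cohomology vanishing advertised in the abstract: \emph{for aperiodic $A$, the unitary $v_g$ is an approximate coboundary for $\Phi$ on $\CB$}, that is, for every $\eps > 0$ there is a unitary $U \in \CB$ with $\|U\Phi(U)^* - v_g\| < \eps$.

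The heart of the matter, and the step I expect to be the main obstacle, is this cohomology vanishing, and it is exactly here that aperiodicity of $A$ is indispensable. The naive attempt to solve $U = v_g\,\Phi(U)$ by the formal infinite product $v_g\,\Phi(v_g)\,\Phi^2(v_g)\cdots$ fails to converge, since each factor $\Phi^m(v_g)$ is a genuinely nontrivial unitary that stays a fixed distance from $1$. The remedy I would pursue is to trivialize the cocycle by an Evans--Kishimoto-type intertwining: one builds $U$ as a convergent limit of telescoping products into which unitary corrections are inserted at each stage, so that the defect is repeatedly pushed into ever deeper levels and absorbed. What makes this work is that aperiodicity forces the AF core of $\CO_A$ to have a primitive Bratteli diagram, which endows the shift $\Phi$ with a strong mixing property: the deep images $\Phi^m(\CB)$ asymptotically commute with any fixed finite-dimensional subset of $\CB$. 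Together with the ample room afforded by pure infiniteness of the Kirchberg algebra $\CO_A$, this asymptotic centrality is what allows the stage-$k$ correction to leave the already-constructed finite part essentially undisturbed, so that the corrected products converge and solve the coboundary equation up to any prescribed $\eps$.

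Finally, two points deserve isolation. Aperiodicity cannot be relaxed to Condition~(I): the Cartan/Cantor-spectrum regime guarantees simplicity but not the primitivity of the core's Bratteli diagram, and without the resulting mixing the shift need not have approximately vanishing cohomology. Outerness of $\sigma$, on the other hand, is used only to situate the problem within Izumi's framework and to keep the statement non-degenerate; the construction above manufactures the implementing unitaries inside $\CO_A^\sigma$ in any case, and it is precisely the \emph{approximate} (rather than exact) solvability of the coboundary equation that reconciles the existence of these fixed-point unitaries with $\sigma$ being genuinely outer.
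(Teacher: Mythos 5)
Your reduction is exactly the paper's opening move: after conjugating so that each $s_i$ is a $\sigma_g$-eigenvector, the unitary $v_g=\sum_i\lambda_i(g)P_i$ satisfies $\ad(U)(s_i)=U\Phi(U)^*s_i$ for any unitary $U$ in $\CB=\CO_A^\sigma\cap\set{Q_1,\dots,Q_n}'$, so the theorem does reduce to finding, for each $\eps>0$, a unitary $U\in\CB$ with $\|U\Phi(U)^*-v_g\|<\eps$. The gap is that you never prove this approximate coboundary statement, and the mechanism you sketch for it does not work as stated. Your claimed mixing property --- that the deep images $\Phi^m(\CB)$ asymptotically commute with fixed finite subsets of $\CB$ --- holds for the AF core but fails for $\CB$, which is a direct sum of corners of a purely infinite simple \cstar-algebra and in particular contains non-unitary isometries built from the $s_\mu$. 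Concretely, already for $\CO_2$ with the outer order-two action $s_1\mapsto s_1$, $s_2\mapsto -s_2$ one has $\CB=\CO_2^\sigma\ni s_1$, and $s_1\Phi^m(x)-\Phi^m(x)s_1=\Phi^m(\Phi(x)-x)s_1$ has norm $\|\Phi(x)-x\|$ independently of $m$, so no asymptotic centrality is available and the Evans--Kishimoto telescoping has nothing to converge against. The paper's actual substitute is different in kind: it exploits that $v_g$ has finite order (since $\sigma_g$ does and $v_g$ commutes with its shifts), so the partial products $u_{g,k}=v_g\Phi(v_g)\cdots\Phi^{k-1}(v_g)$ have finite spectrum and admit unitary paths to $1$ of Lipschitz constant $2\pi$; it then dilates $\Phi|_{\CB}$ (more precisely, its restriction to a simple nuclear invariant subalgebra) to an automorphism $\bar{\phi}$ of a simple \cstar-algebra $B$ --- simplicity of $B$ is where aperiodicity of $A$ is genuinely consumed (Lemma~\ref{simplicity dilated system}) --- tensors with a copy of $\CO_\infty$ found in the $\sigma_\omega$-fixed central sequence algebra so as to land in a Kirchberg algebra, invokes Nakamura's theorem to obtain the Rokhlin property for $\bar{\phi}\otimes\id$, and runs a Rokhlin-tower averaging with those short paths to produce $z_g$ with $z_g\phi_\omega(z_g)^*$ within $2\pi/r$ of $v_g$.

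You also misplace the role of outerness. It is not there merely ``to situate the problem within Izumi's framework'': the paper needs $\sigma$ outer so that $\CO_A^\sigma$ is a unital Kirchberg algebra (this is the simple nuclear subalgebra $C$ whose dilation must be simple before Nakamura's theorem can be applied) and so that $((\CO_A)_\omega\cap\CO_A')^{\sigma_\omega}$ is simple and purely infinite, which supplies the unital copy of $\CO_\infty$ commuting with everything in sight. Any correct argument along these lines must consume outerness at such a point; your sketch consumes it nowhere, which is a further indication that the decisive step --- the cohomology vanishing itself --- is missing rather than merely compressed.
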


As a consequence, one gets an analogous result for unital graph \cstar-algebras. The conditions we have to impose on the finite graphs are strong connectedness and aperiodicity. Here, a graph is said to be strongly connected aperiodic if there is some $k \geq 1$ such that for any two of its edges $v,w$ there is some path of length $k$ from $v$ to $w$. The notion of (diagonal) quasi-free automorphisms transfers to the setting of unital graph \cstar-algebras in a straightforward manner.

\begin{theoremoz} [see Corollary {\ref{cor:main result graphs}}]
Let $E$ be a finite graph that contains at least two edges.
Let $G$ be a finite abelian group and let $\sigma:G \curvearrowright \cstar(E)$ be an action by diagonal quasi-free automorphisms on the associated graph \cstar-algebra.
Suppose that $E$ is strongly connected aperiodic.
If $\sigma$ is outer, then it is strongly approximately inner. 
\end{theoremoz}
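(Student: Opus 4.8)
The plan is to deduce this statement from the Cuntz-Krieger version already established in Corollary~\ref{cor:main result} by passing to the edge matrix of the graph. First I would invoke the standard identification of a finite graph \cstar-algebra with a Cuntz-Krieger algebra. The strongly connected aperiodic hypothesis in particular forces every vertex to both emit and receive an edge, so $E$ has neither sinks nor sources. Under this assumption one has the canonical isomorphism $\cstar(E) \cong \CO_B$, where $B$ is the edge matrix of $E$: it is indexed by $E^1 \times E^1$ with $B_{e,f} = 1$ precisely when the edges $e$ and $f$ are composable (the range of $e$ agreeing with the source of $f$) and $B_{e,f} = 0$ otherwise. This isomorphism carries the canonical partial isometry $S_e$ attached to an edge $e$ to the generating isometry of $\CO_B$ indexed by $e$.

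Next I would verify that $B$ meets the hypotheses of Corollary~\ref{cor:main result}. Since $E$ contains at least two edges, $B$ is an $n\times n$-matrix with $n = |E^1| \geq 2$, and by construction it takes values in $\set{0,1}$. The decisive point is aperiodicity: the entries of the powers of $B$ count edge-paths in $E$, so that $(B^k)_{e,f}$ is strictly positive exactly when there is a path of the corresponding length from $e$ to $f$. Hence the strongly connected aperiodic hypothesis—that for some fixed length every ordered pair of edges is joined by a path—translates precisely into the requirement that some power of $B$ has all strictly positive entries. Thus $B$ is aperiodic, and $\CO_B$ is a simple Cuntz-Krieger algebra.

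It then remains to transport the action. Because the isomorphism $\cstar(E)\cong\CO_B$ identifies the canonical generators of the two algebras, it carries diagonal quasi-free automorphisms to diagonal quasi-free automorphisms; concretely, an automorphism of $\cstar(E)$ for which each $S_e$ is an eigenvector corresponds to an automorphism of $\CO_B$ with the analogous property, and by Proposition~\ref{prop:charac quasi-free diagonal autos} one may reduce to this eigenvector form up to conjugacy. Conjugating $\sigma$ through the isomorphism thus yields an action $\tilde\sigma : G \curvearrowright \CO_B$ by diagonal quasi-free automorphisms. Outerness is preserved under conjugacy by an isomorphism, so $\tilde\sigma$ is outer, and Corollary~\ref{cor:main result} shows that $\tilde\sigma$ is strongly approximately inner. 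Since strong approximate innerness is itself a conjugacy invariant—one transports the witnessing unitaries, which lie in the fixed point algebra, back through the isomorphism—it follows that $\sigma$ is strongly approximately inner as well. I expect the only genuinely delicate point to be the bookkeeping in the identification $\cstar(E)\cong\CO_B$ together with the verification that it is equivariant for the two notions of diagonal quasi-free automorphism; everything else amounts to matching definitions and invoking the established Cuntz-Krieger result.
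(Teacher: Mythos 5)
Your proposal is correct and follows essentially the same route as the paper: identify $\cstar(E)$ with the Cuntz-Krieger algebra of the edge matrix, observe that strong connectedness plus aperiodicity of $E$ translates exactly into aperiodicity of that matrix, and transport the action through the canonical isomorphism so that Corollary~\ref{cor:main result} applies. The extra remarks you include (absence of sinks and sources, invariance of strong approximate innerness under conjugation) are accurate and only flesh out what the paper leaves implicit.
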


Our main result generalizes Izumi's \cite[Proposition~5.6(2)]{Izumi2004} for actions on the Cuntz algebras by quasi-free automorphisms.
The argument in his work is a variant of R{\o}rdam's proof \cite{Rordam1993} that unital endomorphisms of Cuntz algebras are approximately inner.
A crucial ingredient therein is the Rokhlin property of the one-sided tensorial shift on the CAR algebra; see \cite{BratteliKishimotoRordamStormer1993}.
In the proof, this is used to derive an approximate cohomology vanishing-type result for the canonical endomorphism restricted to the fixed point algebra of the canonical UHF subalgebra. 
Strong approximate innerness of the quasi-free action is then derived from the well-known bijective correspondence between unitaries and unital endomorphisms of Cuntz algebras; see \cite{Cuntz1980}.

Our approach is of a similar nature.
Although less striking than it is for Cuntz algebras, there is still a close connection between certain unitaries and unital endomorphisms of Cuntz-Krieger algebras; see \cite{Zacharias2000} and \cite{ContiHongSzymanski2012}. 
In particular, one can associate to each diagonal quasi-free automorphism a unique unitary so that the automorphism is given on the canonical partial isometries by left multiplication with this unitary. 
The shift map of a Cuntz-Krieger algebra commutes with any given action by diagonal quasi-free automorphisms for which the canonical partial isometries are eigenvectors.
In general, this map may only be a unital completely positive map, but it restricts to a $*$-homomorphism on the relative commutant of the domain projections of the canonical generators inside the fixed point algebra of the action. 
This resulting endomorphism is injective. 
As it is a priori unclear whether this endomorphism has the Rokhlin property (in the sense of \cite{BrownHirshberg2014} or \cite{Rordam1995a}), our approach deviates at this point from Izumi's original approach and instead considers the dilation of the shift map to an automorphism in the sense of \cite{Laca2000}.
Aperiodicity of the matrix enters the game here to make sure that the dilated \cstar-algebra is a unital Kirchberg algebra and the automorphism is aperiodic, provided that the action is outer; see Lemma~\ref{outerness dilated auto} and \cite{Kishimoto1981, Jeong1995, TomsWinter2007}.
A classical theorem of Nakamura \cite{Nakamura2000} then yields that this automorphism has the Rokhlin property.
As the dilated system embeds into the ultrapower of the shift endomorphism, a modified cohomology vanishing-type technique for the unitaries associated with the action can be performed. 
Strong approximate innerness of the involved automorphisms then follows by going back from unitaries to unital endomorphisms.
In fact, a technically more involved argument yields strong approximate innerness for an a priori larger class of actions; see Theorem~\ref{main theorem}.

A consequence of our main result and Izumi's classification result \cite{Izumi2004a} is that outer actions of cyclic groups with prime power order by diagonal quasi-free automorphisms on Cuntz-Krieger algebras isomorphic to $\CO_2$ are classified in terms of their fixed point algebras and some additional information about their dual actions, provided the associated matrices are aperiodic.
In particular this holds for any (possibly non-standard) identification of $\CO_2$ with a Cuntz-Krieger algebra.
Using Kirchberg-Phillips classification \cite{Kirchberg, Phillips2000}, this simplifies as follows in the case of order two automorphisms.

\begin{coroz}[see Corollary {\ref{cor:classification}}]
Let $m, n \geq 2$. 
Let $A$ be an $m \times m$-matrix and $B$ an $n \times n$-matrix with entries in $\set{0,1}$ such that $\CO_A\cong\CO_B\cong\CO_2$ as abstract \cstar-algebras.
Let $\alpha:\IZ_2 \curvearrowright \CO_A$ and $\beta:\IZ_2 \curvearrowright \CO_B$ be two actions by diagonal quasi-free automorphisms.
Suppose that that $A$ and $B$ are aperiodic and that both $\alpha$ and $\beta$ are outer.
Then $\alpha$ and $\beta$ are (cocycle) conjugate if and only if $\CO_A^\alpha$ and $\CO_B^\beta$ are (stably) isomorphic
\end{coroz}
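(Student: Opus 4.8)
The plan is to prove the two implications separately; everything substantial sits in the reverse direction, and I would keep the honest-conjugacy and cocycle-conjugacy statements in parallel throughout.

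For the forward implication, suppose first that $\alpha$ and $\beta$ are conjugate via an equivariant isomorphism $\phi \colon \CO_A \to \CO_B$. Then $\phi$ restricts to a unital isomorphism $\CO_A^\alpha \cong \CO_B^\beta$, which is the non-stable statement. If $\alpha$ and $\beta$ are merely cocycle conjugate, then $\phi$ intertwines $\alpha$ with a cocycle perturbation of $\beta$, whose fixed point algebra is Morita equivalent to $\CO_B^\beta$ (both being full corners of the common crossed product $\CO_B \rtimes \IZ_2$); hence $\CO_A^\alpha$ and $\CO_B^\beta$ are stably isomorphic. This direction uses none of the machinery developed above.

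For the reverse implication I would proceed in three steps. First, since $A$ and $B$ are aperiodic and $\alpha, \beta$ are outer diagonal quasi-free actions, Corollary~\ref{cor:main result} shows that both are strongly approximately inner. Second, I would upgrade this to approximate representability: as the underlying algebra is $\CO_2 \cong \CO_2 \otimes M_{2^\infty}$, each action absorbs the trivial action on $M_{2^\infty}$ up to cocycle conjugacy, and by \cite{Izumi2004, Szabo2017} this absorption makes strong approximate innerness and approximate representability coincide. Third, I would feed this into Izumi's classification of approximately representable actions of cyclic groups of prime power order on $\CO_2$ from \cite{Izumi2004a}, according to which two such $\IZ_2$-actions are cocycle conjugate exactly when their fixed point algebras agree together with the extra datum recorded by the dual action.

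It then remains to eliminate the dual-action datum in the case of the group $\IZ_2$, and this is where I expect the principal obstacle. Outerness forces the crossed product $\CO_2 \rtimes_\alpha \IZ_2$ to be a simple, separable, nuclear, purely infinite \cstar-algebra, and since outer actions of finite groups on simple \cstar-algebras are saturated, $\CO_2^\alpha$ is a full corner of it and hence a unital Kirchberg algebra with the same $K$-theory. Moreover $\CO_2^\alpha$ satisfies the UCT: using the $M_{2^\infty}$-absorption established above, the criterion of \cite{BarlakLi2017} applies, and it is met because the canonical generating partial isometries form an inverse semigroup of $\alpha$-homogeneous elements generating $\CO_2$ (each being an eigenvector after passing to the normal form of Proposition~\ref{prop:charac quasi-free diagonal autos}), while aperiodicity guarantees Condition (I) so that their range projections generate a Cartan subalgebra. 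Kirchberg-Phillips classification \cite{Kirchberg, Phillips2000} then reduces the isomorphism problem for these fixed point algebras to $K$-theory, pointed by the class of the unit for honest isomorphism and unpointed for stable isomorphism. The crux is to show that Izumi's dual-action datum is already a function of this $K$-theory: here one must exploit the rigidity forced by order two together with $K_*(\CO_2) = 0$ and Takai duality, which returns the iterated crossed product to $\CO_2 \otimes \IK$ and thereby constrains the dual $\IZ_2$-action so strongly that it carries no information beyond $K_*(\CO_2^\alpha)$. Granting this reduction, Izumi's invariant collapses precisely to the unital isomorphism class of $\CO_2^\alpha$ for conjugacy and to its stable isomorphism class for cocycle conjugacy, which is the assertion.
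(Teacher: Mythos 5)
Your forward implication is fine, and your first step---strong approximate innerness via Corollary~\ref{cor:main result}---is exactly how the paper begins. But from there the paper's proof is a direct citation: \cite[Theorem~4.8]{Izumi2004} is Izumi's classification theorem \emph{specifically} for strongly approximately inner outer $\IZ_2$-actions on $\CO_2$, in which the dual-action datum has already been eliminated, and combining it with Kirchberg--Phillips classification \cite{Kirchberg,Phillips2000} immediately yields the statement. Your detour through approximate representability and the general prime-power classification of \cite{Izumi2004a} is therefore unnecessary, and as written it contains two genuine gaps.

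First, the absorption step: from $\CO_2\cong\CO_2\otimes M_{2^\infty}$ you conclude that $\alpha$ absorbs the trivial action on $M_{2^\infty}$ up to cocycle conjugacy. Equivariant absorption does not follow from absorption at the level of the underlying algebra; it is precisely the extra hypothesis that has to be assumed (or verified) in \cite{Izumi2004}, \cite{Szabo2017} and \cite{BarlakLi2017}, and it would in particular force $K_0(\CO_2^\alpha)$ to be a $\IZ[1/2]$-module, which you have not established. Everything downstream of this---the passage from strong approximate innerness to approximate representability, and the UCT argument via \cite{BarlakLi2017}---rests on this unproved claim. Second, and decisively: the assertion that Izumi's dual-action datum is a function of $K_*(\CO_2^\alpha)$ for $\IZ_2$-actions is exactly the content you would need to supply, and ``granting this reduction'' concedes that you have not supplied it; the gesture towards Takai duality and ``rigidity forced by order two'' is not an argument. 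That reduction is precisely what \cite[Theorem~4.8]{Izumi2004} provides for $\IZ_2$ on $\CO_2$, and the correct (and short) proof is to quote it.
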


\bigskip
\textbf{Acknowledgement.}
Substantial parts of this work were carried out during a research visit of the second author to the University of Southern Denmark in January 2017 and during a research visit of the first author to the University of Aberdeen in July 2017.
The authors are grateful to both institutions for their hospitality and support.
The first author would like to thank Wojciech Szyma{\'n}ski for inspiring discussions on endomorphisms of Cuntz-Krieger algebras and Rokhlin-type properties for endomorphisms. The authors are also grateful to Joachim Cuntz for valuable comments. The first author was supported by Villum Fonden project grant `Local and global structures of groups and their algebras' (2014-2018). The second author was supported by EPSRC grant EP/N00874X/1, the Danish National Research Foundation through the \emph{Centre for Symmetry and Deformation} (DNRF92), and the European Union's Horizon 2020 research and innovation programme under the Marie Sklodowska-Curie grant agreement 746272.

\section{Preliminaries}

We start by recalling the definition of a Cuntz-Krieger algebra.

\begin{defi}[\cite{CuntzKrieger1980}] \label{defi CK algebras}
Let $n \geq 2$ be an integer. Let $A$ be an $n \times n$ matrix with entries in $\set{0,1}$ and no zero rows or columns. The Cuntz-Krieger algebra $\CO_A$ associated with $A$ is the universal \cstar-algebra generated by a family of partial isometries $\set{s_1,\ldots,s_n}$ subject to the relations
\begin{enumerate}[label=\textup{\arabic*)}, leftmargin=*]
\item $s_i^*s_j = 0$ \quad for $i \neq j$,\label{label:ck:1}
\item $s_i^*s_i = \sum_{j = 1}^n A(i,j) s_js_j^*$ \quad for $i=1,\ldots,n$.\label{label:ck:2}
\end{enumerate}
\end{defi}

\begin{nota} \label{nota: s_mu}
For $i \in \set{1,\ldots, n}$, we set $q_i = s_i^*s_i$ and $p_i = s_is_i^*$. One can check that $\CO_A$ is unital with $1 = \sum_{i = 1}^n p_i$.
If $\mu = (\mu_1,\ldots,\mu_k) \in \set{0,1}^k$ is a multi-index, then we set $s_\mu = s_{\mu_1}\ldots s_{\mu_k} \in \CO_A$. It holds that $s_\mu \neq 0$ if and only if $A(\mu_i,\mu_{i+1}) = 1$ for all $i\in \set{1,\ldots, k-1}$. We denote by $W^k$ the set of all multi-indices $\mu \in \set{0,1}^k$ such that $s_\mu \neq 0$.
\end{nota}

\begin{defi}
An $n \times n$-matrix $A$ with entries in $\set{0,1}$ is said to be aperiodic if there exists some $m \geq 1$ such that $A^m(i,j) > 0$ for all $i,j \in \set{1,\ldots,n}$.
\end{defi}

We shall later need the following well known result for Cuntz-Krieger algebras associated with aperiodic matrices over $\set{0,1}$.

\begin{theorem} \label{thm: Kirchberg aper mat}
Let $n\geq 2$. 
Let $A$ be an aperiodic $n \times n$-matrix with entries in $\set{0,1}$. 
Then $\CO_A$ is a unital Kirchberg algebra.
\end{theorem}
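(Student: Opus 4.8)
The plan is to verify the defining properties of a unital Kirchberg algebra one by one, reducing the nontrivial content to the classical simplicity and pure infiniteness results of Cuntz and Krieger. Recall that a unital Kirchberg algebra is a unital, separable, nuclear, simple, purely infinite \cstar-algebra. Three of these properties are immediate or standard: $\CO_A$ is unital by Notation~\ref{nota: s_mu}, it is separable since it is generated by the finite family $\set{s_1,\ldots,s_n}$, and it is nuclear, as is every Cuntz-Krieger algebra (for instance, it arises as the \cstar-algebra of an amenable groupoid associated with the underlying subshift of finite type); see \cite{CuntzKrieger1980}. It therefore remains to establish simplicity and pure infiniteness, and this is where the aperiodicity hypothesis enters.

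First I would translate aperiodicity into the combinatorial conditions under which the Cuntz-Krieger dichotomy applies. By assumption there is some $m \geq 1$ with $A^m(i,j) > 0$ for all $i,j$, which says precisely that for any two indices $i,j$ there is an admissible path of length $m$ from $i$ to $j$; hence $A$ is irreducible. Moreover, since $n \geq 2$, the matrix $A$ cannot be a permutation matrix: any power of a permutation matrix is again a permutation matrix and so has a vanishing entry when $n \geq 2$, contradicting $A^m(i,j) > 0$ for all $i,j$. Thus $A$ is an irreducible $\set{0,1}$-matrix that is not a permutation matrix.

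With these two combinatorial facts in hand, the conclusion follows from the foundational structure theorem for Cuntz-Krieger algebras: if $A$ is irreducible and not a permutation matrix, then $\CO_A$ is simple and purely infinite; see \cite{CuntzKrieger1980} and \cite{Cuntz1981a}. Note that irreducibility together with the exclusion of permutation matrices already forces Condition~(I), so the relevant uniqueness and simplicity statements are indeed available in this setting. Combining this with the three elementary properties recorded above yields that $\CO_A$ is a unital Kirchberg algebra.

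I do not expect a genuine obstacle here, since the statement is an assembly of well-known results rather than a new argument, which is consistent with its being invoked only as a black box later on. The only points requiring care are the bookkeeping that connects the analytic notion of aperiodicity of $A$ to the precise hypotheses (irreducibility and the exclusion of permutation matrices) of the classical Cuntz-Krieger simplicity theorem, and the separate observation that nuclearity is a structural feature of $\CO_A$ independent of these matrix conditions and hence must be cited on its own rather than deduced from aperiodicity.
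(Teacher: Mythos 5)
Your proposal is correct and follows essentially the same route as the paper: both reduce the statement to the classical Cuntz--Krieger simplicity and pure infiniteness results (the paper cites \cite[Proposition~1.6]{Cuntz1981} and \cite[Theorem~2.14]{CuntzKrieger1980}), with unitality, separability and nuclearity handled by standard citations. The only cosmetic difference is the justification of nuclearity --- the paper invokes stable isomorphism with a crossed product of an AF algebra by an automorphism rather than groupoid amenability --- and your explicit verification that aperiodicity yields irreducibility and excludes permutation matrices is a detail the paper leaves implicit.
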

\begin{proof}
Clearly, $\CO_A$ is unital and separable. It follows from \cite[Proposition~1.6]{Cuntz1981} that $\CO_A$ is simple and purely infinite; see also \cite[Theorem~2.14]{CuntzKrieger1980}. 
Furthermore, $\CO_A$ is nuclear, as it is stably isomorphic to a crossed product of an AF algebra by an automorphism; see \cite[Example~2.5]{Rordam1995a}.
\end{proof}

As shown in \cite{Cuntz1980}, there exists a canonical bijective correspondence between unital endomorphisms and unitaries of the Cuntz algebra $\CO_n$ for finite $n$. 
A similar result was obtained in \cite{Zacharias2000} for Cuntz-Krieger-Pimsner algebras. In the case of Cuntz-Krieger algebras, such a correspondence exists for unital endomorphisms that fix all domain projections of the canonical generators and unitaries commuting with these projections; see also \cite{ContiHongSzymanski2012}. 

\begin{prop} \label{prop:correspond graph alg endos}
Let $A$ be an $n \times n$-matrix with entries in $\set{0,1}$ and no zero rows or columns. For any unitary $u \in \CU(\CO_A) \cap \set{q_i \mid 1 \leq i \leq n}'$, there exists a unique unital $*$-endomorphism $\lambda_u$ of $\CO_A$ such that $\lambda_u(q_i) = q_i$ and $\lambda_u(s_i) = us_i$ for all $i \in \set{1,\ldots,n}$.  Conversely, if $\sigma: \CO_A \to \CO_A$ is a unital $*$-endomorphism such that $\sigma(q_i) = q_i$ for all $i$, then $u_\sigma = \sum_{i = 1}^n \sigma(s_i)s_i^*$ is a unitary in $\CU(\CO_A) \cap \set{q_i \mid 1 \leq i \leq n}'$. These assignments are inverse to each other and continuous with respect to the norm and the point-norm topology, respectively.
\end{prop}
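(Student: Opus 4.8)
The plan is to build both assignments by hand and then check that they are mutually inverse and continuous, using throughout the universal property of $\CO_A$ from Definition~\ref{defi CK algebras}. The only genuine work is a handful of relation-checks; the point to watch is where the commutation hypotheses are consumed.

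For the first assertion I would take $u \in \CU(\CO_A) \cap \set{q_i \mid 1 \leq i \leq n}'$, set $t_i := us_i$, and verify that $\set{t_1, \ldots, t_n}$ again satisfies the Cuntz-Krieger relations, so that the universal property yields a unique unital $*$-homomorphism $\lambda_u$ with $\lambda_u(s_i) = t_i$. Relation~\ref{label:ck:1} is immediate from $t_i^*t_j = s_i^* u^* u s_j = s_i^*s_j = \delta_{ij}q_i$; in particular $t_i^*t_i = q_i$, which already forces $\lambda_u(q_i) = q_i$. For relation~\ref{label:ck:2} I would compute $\sum_j A(i,j)\,t_jt_j^* = u\big(\sum_j A(i,j)p_j\big)u^* = u q_i u^*$, so that this relation holds for the $t_i$ \emph{precisely} because $u$ commutes with $q_i$; this is the one place where the hypothesis $u \in \set{q_i}'$ is used. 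Unitality follows from $\lambda_u(1) = \sum_i t_it_i^* = u\big(\sum_i p_i\big)u^* = 1$, and uniqueness is clear since the $s_i$ generate $\CO_A$.

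For the converse, given a unital $*$-endomorphism $\sigma$ with $\sigma(q_i) = q_i$, I would put $u_\sigma = \sum_i \sigma(s_i)s_i^*$ and establish unitarity by the two computations $u_\sigma^* u_\sigma = \sum_{i,j} s_i\,\sigma(s_i^*s_j)\,s_j^* = \sum_i s_iq_is_i^* = \sum_i p_i = 1$ and, symmetrically, $u_\sigma u_\sigma^* = \sum_i \sigma(s_i)q_i\sigma(s_i)^* = \sum_i \sigma(p_i) = \sigma(1) = 1$, both of which feed in $\sigma(q_i)=q_i$ together with multiplicativity. The crucial intermediate identity is $u_\sigma s_j = \sum_i \sigma(s_i)s_i^*s_j = \sigma(s_j)q_j = \sigma(s_j)$, whose companion $\sigma(s_j)^* u_\sigma = \sigma(q_j)s_j^* = s_j^*$ I would then use to check commutation with the domain projections: writing $q_i = \sum_j A(i,j)s_js_j^*$ gives $u_\sigma q_i = \sum_j A(i,j)\sigma(s_j)s_j^*$, while writing $q_i = \sigma(q_i) = \sum_j A(i,j)\sigma(s_j)\sigma(s_j)^*$ gives $q_i u_\sigma = \sum_j A(i,j)\sigma(s_j)s_j^*$; the two coincide, so $u_\sigma \in \CU(\CO_A) \cap \set{q_i \mid 1 \leq i \leq n}'$. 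This commutation step, in which one must insert $\sigma(q_i)=q_i$ and the identity $u_\sigma s_j = \sigma(s_j)$ in exactly the right places, is the main thing to get right.

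The assignments being mutually inverse is then almost formal: $u_{\lambda_u} = \sum_i (us_i)s_i^* = u\sum_i p_i = u$, while $\lambda_{u_\sigma}(s_i) = u_\sigma s_i = \sigma(s_i)$ forces $\lambda_{u_\sigma} = \sigma$ since both are $*$-endomorphisms agreeing on the generators. For continuity, I would observe that $\sigma \mapsto u_\sigma$ is a finite sum, so point-norm convergence $\sigma_m \to \sigma$ immediately gives $u_{\sigma_m} \to u_\sigma$ in norm; conversely, if $u_m \to u$ in norm, then for every pair of multi-indices $\mu,\nu$ one has $\lambda_{u_m}(s_\mu s_\nu^*) \to \lambda_u(s_\mu s_\nu^*)$, because $\lambda_{u_m}(s_\mu) = (u_m s_{\mu_1})\cdots(u_m s_{\mu_\ell})$ depends continuously on $u_m$, and a standard $\eps/3$-argument using that all $\lambda_{u_m}$ are contractive upgrades this to point-norm convergence on all of $\CO_A$.
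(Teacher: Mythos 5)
Your proof is correct. The paper itself does not prove this proposition — it is stated with pointers to \cite{Zacharias2000} and \cite{ContiHongSzymanski2012} — so there is no in-paper argument to compare against; your direct verification (checking the Cuntz--Krieger relations for $t_i = us_i$, where commutation of $u$ with the $q_i$ is indeed exactly what makes relation~2 survive, and the converse computations $u_\sigma s_j = \sigma(s_j)$ and $u_\sigma q_i = q_i u_\sigma$, followed by the standard density and $\eps/3$ arguments for continuity) is the expected argument and all steps check out.
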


\begin{rem} \label{rem:conv formula}
It follows from Proposition~\ref{prop:correspond graph alg endos} that, given two unital endomorphisms  $\rho,\sigma:\CO_A \to \CO_A$ fixing all domain projections of the canonical generators, the associated unitaries satisfy the convolution formula $u_{\sigma \circ \rho} = \sigma(u_\rho)u_\sigma$. In particular, for any $n\geq 1$, one has $\sigma^n = \id$ precisely when $\sigma^{n-1}(u_\sigma)\ldots\sigma(u_\sigma)u_\sigma = 1$.
\end{rem}

\begin{defi}[cf.\ {\cite{Zacharias2000}}] \label{defi:quasi-free}
Let $A$ be an $n \times n$-matrix with entries in $\set{0,1}$ and no zero rows or columns. An automorphism $\sigma \in \Aut(\CO_A)$ is said to be diagonal quasi-free if
\[
\begin{array}{c}
\sigma(q_i)  = q_i \text{ for all } i \in \set{1,\ldots,n} \text{ and}\\
\sigma(\linhull\set{s_i \mid 1 \leq i \leq n}) = \linhull\set{s_i \mid 1 \leq i \leq n}.
\end{array}
\]
\end{defi}

We note that the above definition is slightly more general than the one in \cite{Zacharias2000}. In fact, Zacharias requires a diagonal quasi-free automorphism to restrict to the identity on the set of range projections of the canonical generators.
However, the two notions of diagonal quasi-free automorphisms lead to the same conjugacy classes of automorphisms, as the next result shows.
It is certainly known to experts, and partly contained in \cite{ContiHongSzymanski2012}. For the reader's convenience, we give a proof here.

\begin{prop} \label{prop:charac quasi-free diagonal autos}
Let $A$ be an $n \times n$-matrix with entries in $\set{0,1}$ and no zero rows or columns. 
Consider the finite dimensional \cstar-subalgebra 
\[
B = \linhull\big\{ s_is_j^* \mid 1 \leq i,j\leq n \big\}
\]
of $\CO_A$.
Then the following assertions hold:
\begin{enumerate}[label=\textup{\arabic*)}, leftmargin=*]
\item The bijective map in Proposition~\ref{prop:correspond graph alg endos} restricts to a group isomorphism between the set of diagonal quasi-free automorphisms and $\CU(B) \cap \set{q_i \mid 1 \leq i \leq n}'$. \label{label:cqf:1}
\item Let $G$ be an abelian group. Then every action $\sigma:G \curvearrowright \CO_A$ by diagonal quasi-free automorphisms is conjugate to an action $\rho:G \curvearrowright \CO_A$ with the property that for all $g \in G$ and $i \in \set{1,\ldots,n}$ there exists some $\eta_{g,i} \in \IT$ such that $\rho_g(s_i) = \eta_{g,i} s_i$. \label{label:cqf:2}
\end{enumerate}
\end{prop}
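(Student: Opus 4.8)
\emph{Setup.}
The plan is to run both parts through the combinatorics of $A$. Call two indices equivalent, $i \sim j$, if the $i$-th and $j$-th rows of $A$ agree \emph{and} the $i$-th and $j$-th columns of $A$ agree; let $I_1,\ldots,I_r$ be the resulting partition of $\set{1,\ldots,n}$. I would first record the eigenrelations $q_i s_k = A(i,k) s_k$ and $s_k^* q_i = A(i,k) s_k^*$, which are immediate from $q_i = \sum_j A(i,j) p_j$ together with $s_j^* s_k = \delta_{jk} q_k$. From these one checks that for $k \sim l$ the elements $e_{kl} := s_k s_l^*$ satisfy $e_{kl} e_{kl}^* = p_k$, $e_{kl}^* e_{kl} = p_l$ and $e_{kl} e_{l'l''} = \delta_{ll'} e_{kl''}$, so that $M := \bigoplus_{t=1}^r \linhull\set{e_{kl} \mid k,l \in I_t}$ is a finite dimensional multimatrix \cstar-subalgebra of $\CO_A$ contained in $B$, with $1_M = 1$. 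Since $k \sim l$ in particular forces $A(i,k) = A(i,l)$ for all $i$, every element of $M$ commutes with all $q_i$, so $\CU(M) \subseteq \CU(B) \cap \set{q_i}'$; the content of \ref{label:cqf:1} is that this inclusion is an equality.

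\emph{Proof of \ref{label:cqf:1}.}
The easy inclusion is clear: if $\sigma$ is diagonal quasi-free then $\sigma(s_i) \in \linhull\set{s_j}$, so $u_\sigma = \sum_i \sigma(s_i) s_i^* \in B$, while $u_\sigma \in \CU(\CO_A) \cap \set{q_i}'$ by Proposition~\ref{prop:correspond graph alg endos}. For the reverse inclusion I would take $u = \sum_{k,l} b_{kl} e_{kl} \in \CU(B) \cap \set{q_i}'$ and peel off the coefficients in two steps. First, $[u,q_i] = \sum_{k,l} b_{kl}(A(i,l) - A(i,k)) e_{kl}$ vanishes for every $i$, which forces $b_{kl} = 0$ unless columns $k$ and $l$ of $A$ coincide. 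Second, and this is the crux, unitarity is exploited through the length filtration of the gauge-invariant core: reducing $uu^*$ and $u^*u$ with the relations, the diagonal contribution of a pair $(k,l)$ is $s_k q_l s_k^* = \sum_m A(k,m) A(l,m)\, p_{km}$ with $p_{km} = s_k s_m s_m^* s_k^*$, whereas the off-diagonal ($k \neq k'$) products contribute nothing to these length-two projections. Matching $uu^* = u^*u = 1$ against $1 = \sum_{k,m} A(k,m) p_{km}$ therefore yields the scalar identities $\sum_l |b_{kl}|^2 A(l,m) = 1$ for all $(k,m)$ with $A(k,m)=1$, and symmetrically $\sum_k |b_{kl}|^2 A(k,m) = 1$ for all $(l,m)$ with $A(l,m)=1$; combined with the column condition these cascade to force $b_{kl} = 0$ whenever rows $k$ and $l$ differ. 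Hence $u \in M$. Now $\lambda_u(e_{kl}) = (u s_k)(u s_l)^* = u e_{kl} u^*$, so $\lambda_u|_M = \ad(u)$ and in particular $\lambda_u(s_i) = u s_i \in \linhull\set{s_j}$, whence $\lambda_u$ is diagonal quasi-free; it is bijective because $u^* \in M$ as well. Finally, the convolution formula of Remark~\ref{rem:conv formula} collapses on $M$ to $u_{\sigma \circ \rho} = \sigma(u_\rho) u_\sigma = u_\sigma u_\rho u_\sigma^* u_\sigma = u_\sigma u_\rho$, so $\sigma \mapsto u_\sigma$ is a group isomorphism onto $\CU(B) \cap \set{q_i}' = \CU(M)$.

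\emph{Proof of \ref{label:cqf:2}.}
I would feed the action into \ref{label:cqf:1}, so that $g \mapsto u_{\sigma_g}$ is a homomorphism from $G$ into $\CU(M) = \prod_{t} \CU\big(\linhull\set{e_{kl} \mid k,l \in I_t}\big)$; equivalently, $\sigma$ gives a unitary representation of $G$ on the finite dimensional space $V = \linhull\set{s_j}$ respecting the decomposition $V = \bigoplus_t V_t$ with $V_t = \linhull\set{s_j \mid j \in I_t}$. As $G$ is abelian, for each $t$ the unitaries $\set{\sigma_g|_{V_t}}_{g \in G}$ form a commuting family and can be simultaneously diagonalized by a single unitary $W_t$ of $V_t$; the block unitary $W = \bigoplus_t W_t \in \CU(M)$ corresponds under \ref{label:cqf:1} to a diagonal quasi-free automorphism $\theta$ with $\theta(s_i) = \sum_j W_{ji} s_j$. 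Then $\rho_g := \theta^{-1} \circ \sigma_g \circ \theta$ defines an action conjugate to $\sigma$ with $\rho_g(s_i) = \eta_{g,i} s_i$, where $\eta_{g,i} \in \IT$ is the joint eigenvalue appearing on the diagonal of $W^* [\sigma_g|_V] W$.

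\emph{Expected obstacle.}
The genuinely delicate step is the unitarity argument in \ref{label:cqf:1}: commutation with the $q_i$ only controls the columns of $A$, so one really has to use $uu^* = u^*u = 1$ to eliminate the cross-row terms, and the careful part is the bookkeeping of the length-two defect projections $p_{km}$ in the AF core, verifying that they cannot be produced off-diagonally so that the scalar identities above propagate (within each column class) to diagonality. Once $\CU(B) \cap \set{q_i}' = \CU(M)$ is in place, the rest is routine: the group isomorphism falls out of Proposition~\ref{prop:correspond graph alg endos} and Remark~\ref{rem:conv formula}, and \ref{label:cqf:2} reduces to the standard simultaneous diagonalization of a commuting family of unitaries indexed by the finite abelian group $G$.
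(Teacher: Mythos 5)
Your overall architecture differs from the paper's: the paper never introduces the subalgebra $M$ supported on the classes where both rows and columns of $A$ agree; it simply verifies $\lambda_u\lambda_w=\lambda_{uw}$ and reads off $\lambda_u(s_j)=\sum_i\eta_{ij}s_i$ from $u=\sum_{i,j}\eta_{ij}s_is_j^*$, and in \ref{label:cqf:2} it conjugates the commuting unitaries into the MASA $\linhull\set{p_i}$ inside $B\cap\set{q_i\mid 1\le i\le n}'$. Your part \ref{label:cqf:2} is essentially that same argument in linear-algebra clothing and is fine once \ref{label:cqf:1} is in place. The problem is the crux of your part \ref{label:cqf:1}: the ``cascade'' by which the scalar identities $\sum_l|b_{kl}|^2A(l,m)=1$ (for $A(k,m)=1$) together with the column condition are supposed to force $b_{kl}=0$ whenever rows $k$ and $l$ differ. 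This step is not merely unjustified; the conclusion is false. Take
\[
A=\begin{pmatrix}1&1&1&0\\0&0&0&1\\1&1&1&1\\1&1&1&0\end{pmatrix},\qquad
u=s_1s_3^*+s_3s_1^*+s_2s_3^*+s_3s_2^*+p_4 .
\]
Here columns $1,2,3$ of $A$ coincide, so $u$ commutes with every $q_i$, and $u=u^*$ with $u^2=1$: indeed $s_1s_3^*s_3s_1^*=s_1q_3s_1^*=p_1$ since $q_3=1$, likewise $s_2s_3^*s_3s_2^*=p_2$, the two middle terms combine to $s_3(q_1+q_2)s_3^*=s_3\cdot 1\cdot s_3^*=p_3$, and all cross terms vanish because $q_1q_2=0$. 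So $u\in\CU(\CO_A)\cap B\cap\set{q_i\mid 1\le i\le n}'$, its coefficient matrix satisfies all of your scalar identities (e.g.\ for $k=3$: $A(1,m)+A(2,m)=1$ for every $m$), and yet $b_{13}=1\neq 0$ although rows $1$ and $3$ of $A$ differ. Hence $\CU(B)\cap\set{q_i\mid 1\le i\le n}'\neq\CU(M)$, and no bookkeeping of the length-two projections can rescue the step.

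That said, your instinct about where the difficulty sits is exactly right, and the example above shows it is a difficulty with the \emph{statement}, not just with your argument: for this $u$ one has $\lambda_u(s_1)=us_1=s_3q_1=s_3-s_3s_4s_4^*\notin\linhull\set{s_i\mid 1\le i\le n}$, so $\lambda_u$ is an automorphism fixing all $q_i$ but is not diagonal quasi-free. The paper's own computation $\lambda_u(s_j)=\sum_i\eta_{ij}s_i$ silently uses $s_is_j^*s_j=s_i$, i.e.\ $q_i\le q_j$ whenever $\eta_{ij}\neq 0$, which is precisely the point you were trying (and are unable in general) to force from unitarity; for the same matrix, $B$ is not even closed under multiplication (e.g.\ $s_3s_1^*\cdot s_1s_3^*=p_3-s_3p_4s_3^*\notin B$), so ``$\CU(B)$'' needs interpretation. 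In short: the easy inclusion, the group law $u_{\sigma\circ\rho}=u_\sigma u_\rho$, and the diagonalization in \ref{label:cqf:2} are all correct and only use the unproblematic direction, but the reverse inclusion in \ref{label:cqf:1} fails as stated for some admissible $A$, and your proof of it contains a genuinely false step rather than a repairable gap. A correct formulation would replace $\CU(B)\cap\set{q_i\mid 1\le i\le n}'$ by $\CU(M)$ (equivalently, require $u s_j\in\linhull\set{s_i\mid 1\le i\le n}$ for all $j$), for which your matrix-unit computations for $M$ do go through.
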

\begin{proof}
\ref{label:cqf:1}: Let $u,w \in \CU(B) \cap \set{q_i \mid 1 \leq i \leq n}'$. Using that $\lambda_u(w) = uwu^*$, one computes for $i \in \set{1,\ldots,n}$ that
\[
\lambda_u \lambda_w(s_i) = \lambda_u(w)us_i = uwu^*us_i = uws_i.
\]
Hence, $\lambda_{uw} = \lambda_u \lambda_w$. By taking $w = u^*$, it follows in particular that $\lambda_u$ is an automorphism. Moreover, if $u = \sum_{i,j =1}^n\eta_{i,j} s_i s_j^*$, then 
\[
\lambda_u(s_j) = \sum_{i =1}^n \eta_{i,j} s_i.
\]
This shows that $\lambda_u$ is diagonal quasi-free. On the other hand, it is clear that $u_\sigma \in \CU(B) \cap \set{q_i \mid 1 \leq i \leq n}'$ if $\sigma \in \Aut(\CO_A)$ is diagonal quasi-free. 

\ref{label:cqf:2}: Let $\sigma:G \curvearrowright \CO_A$ be an action of an abelian group by diagonal quasi-free automorphisms. 
By \ref{label:cqf:1}, we find a unitary representation $u:G \to \CU(B) \cap \set{q_i \mid 1 \leq i \leq n}'$ such that $\sigma_g = \lambda_{u(g)}$ for all $g \in G$. 
As $G$ is abelian, $u$ maps into some MASA of the finite dimensional \cstar-algebra $B \cap \set{q_i \mid 1 \leq i \leq n}'$. 
Note that $\linhull\set{p_i \mid 1\leq i \leq n}$ is a MASA in this \cstar-algebra. 
Using that every two MASAs in a finite dimensional \cstar-algebra are unitarily conjugate, we find some $w \in \CU(B) \cap \set{q_i \mid 1 \leq i \leq n}'$ such that
\[
wu(g)w^* \in \linhull\set{p_i \mid 1\leq i \leq n} \quad \text{for all } g \in G.
\]
It follows from \ref{label:cqf:1} that $\lambda_{wu(g)w^*} = \lambda_w \sigma_g \lambda_w^{-1}$ for all $g \in G$. Hence, the action $\rho:G \curvearrowright \CO_A$ given by $\rho_g = \lambda_{wu(g)w^*}$ is conjugate to $\sigma$. Furthermore, it is straighthforward to check that for $g \in G$ and $i \in \set{1,\ldots,n}$ there exists $\eta_{g,i} \in \IT$ such that $\rho_g(s_i) = \eta_{g,i} s_i$. This finishes the proof.
\end{proof}

\section{Main results}

\begin{nota}
We call an automorphism $\alpha$ on a unital \cstar-algebra $A$ aperiodic if $\alpha^k$ is outer for all $k \neq 0$.

For a free filter $\omega$ on $\IN$, we denote the $(\omega)$-sequence algebra of $A$ by
\[
A_\omega = \ell^\infty(\IN,A) \big / \set{(x_n)_n \in \ell^\infty(\IN,A) \mid  \lim_{n \to \omega} \|x_n\| = 0}.
\]
If $\omega_\infty$ is the Fr{\'e}chet filter, we simply write $A_\infty = A_{\omega_\infty}$. There is a canonical embedding of $A$ into $A_\omega$ by (representatives of) constant sequences. Any endomorphism $\psi:A \to A$ induces endomorphisms $\psi_\omega:A_\omega \to A_\omega$ and $\psi_\infty:A_\infty \to A_\infty$. These assignments are functorial, so that any discrete group action $\alpha:G \curvearrowright A$ induces group actions $\alpha_\infty$ and $\alpha_\omega$ of $G$ on $A_\infty$ and $A_\omega$, respectively.
\end{nota}

\begin{prop} \label{outerness dilated auto}
Let $A$ be a separable, unital \cstar-algebra and $\psi: A \to A$ be a unital and injective $*$-homomorphism. Let
\[
(B, \bar{\psi}) = \lim_{\longrightarrow} \set{(A,\psi),\psi}
\]
denote its dilation to a $*$-automorphism; see \cite{Laca2000}. If $\psi$ is not an asymptotically inner $*$-automorphism\footnote{This means $\psi$ is a $*$-automorphism and there is no norm-continuous path $(u_t)_{t \in [0,\infty)}$ of unitaries in $A$ such that $\psi = \lim\limits_{t \to \infty} \ad(u_t)$.}, then $\bar{\psi}$ is outer. 
In particular, for every $k \geq 1$ one has that if $\psi^k$ is not an asymptotically inner $*$-automorphism, then ${\bar{\psi}}^k$ is outer.
\end{prop}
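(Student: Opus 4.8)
The plan is to prove the contrapositive of the first assertion: assuming $\bar\psi$ is inner, I will show that $\psi$ is in fact a genuinely inner automorphism of $A$, which is in particular an asymptotically inner $*$-automorphism. First I recall the relevant structure of Laca's dilation. Write $\iota_m \colon A \to B$ for the canonical unital $*$-homomorphisms into the inductive limit $B = \varinjlim\set{(A,\psi),\psi}$, so that $\iota_m = \iota_{m+1}\circ\psi$ and $B = \overline{\bigcup_{m\geq 0}\iota_m(A)}$. Since $\psi$ is injective, each $\iota_m$ is an isometric embedding and the images form an increasing chain $\iota_0(A)\subseteq\iota_1(A)\subseteq\cdots$. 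The dilated automorphism is characterised by $\bar\psi\circ\iota_0 = \iota_0\circ\psi$ and, more importantly for us, by the shift relation $\bar\psi\circ\iota_m = \iota_{m-1}$ for all $m\geq 1$. In particular $\bar\psi^m$ maps $\iota_m(A)$ isometrically onto $\iota_0(A)$.

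Now suppose $\bar\psi = \ad(v)$ for some $v\in\CU(B)$. The crucial observation is that $v$ is a fixed point of $\bar\psi$, since $\ad(v)(v)=v$. Given $\eps>0$, choose $m\geq 0$ and $b\in\iota_m(A)$ with $\|v-b\|<\eps$, which is possible because $\bigcup_m\iota_m(A)$ is dense in $B$. Applying the isometry $\bar\psi^m$, which fixes $v$ and carries $\iota_m(A)$ onto $\iota_0(A)$, yields
\[
\|v - \bar\psi^m(b)\| = \|\bar\psi^m(v) - \bar\psi^m(b)\| = \|v - b\| < \eps,
\]
where $\bar\psi^m(b)\in\iota_0(A)$. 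As $\eps$ was arbitrary and $\iota_0(A)$ is closed, this forces $v\in\iota_0(A)$.

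Writing $v = \iota_0(u_0)$ for a (necessarily unitary) $u_0\in\CU(A)$ and restricting $\bar\psi = \ad(v)$ to $\iota_0(A)$, I obtain $\iota_0(\psi(a)) = \iota_0(u_0 a u_0^*)$ for every $a\in A$; injectivity of $\iota_0$ then gives $\psi = \ad(u_0)$. Hence $\psi$ is inner, and in particular an asymptotically inner $*$-automorphism (take the constant path $u_t\equiv u_0$), which establishes the contrapositive and thus the first assertion. The ``in particular'' statement is then immediate: passing to the cofinal subsystem of $\varinjlim\set{(A,\psi),\psi}$ indexed by the multiples of $k$ identifies the dilation of the injective unital endomorphism $\psi^k$ with $(B,\bar\psi^k)$, so applying the first assertion to $\psi^k$ in place of $\psi$ shows that $\bar\psi^k$ is outer whenever $\psi^k$ is not asymptotically inner.

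The main point of the argument is the fixed-point/shift observation in the second paragraph; once one notices that the implementing unitary must be $\bar\psi$-invariant, the shift structure of the dilation retracts it into the initial copy $\iota_0(A)$ essentially for free. The only place demanding genuine care is fixing the direction and indexing conventions of Laca's dilation correctly, so that $\bar\psi^m$ really does map $\iota_m(A)$ back onto $\iota_0(A)$; with the opposite convention one would instead push the approximants out towards infinity and the argument would stall.
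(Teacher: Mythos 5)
Your proof is correct, and it takes a genuinely different route from the paper's. The paper realizes $B$ concretely inside the sequence algebra $A_\infty$, approximates the implementing unitary $v$ to within $1/2$ by a tail element $\bar{u}=[(u,\psi(u),\psi^2(u),\ldots)]$ with $u\in\CU(A)$, deduces $\|\psi(a)-uau^*\|<1$ for all contractions $a$ using that $\psi^n$ is isometric, and then invokes \cite[Theorem~8.7.7]{Pedersen1979} together with \cite[Lemma~2.14]{DadarlatEilers2001} to conclude that $\psi$ is an asymptotically inner automorphism. You instead observe that $v$ is fixed by $\bar\psi=\ad(v)$ and use the shift relation $\bar\psi^m\circ\iota_m=\iota_0$ to retract arbitrarily good approximants of $v$ from $\iota_m(A)$ back into the closed subalgebra $\iota_0(A)$, forcing $v\in\iota_0(A)$ exactly and hence $\psi=\ad(u_0)$ genuinely inner. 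This buys a strictly stronger contrapositive ($\bar\psi$ inner implies $\psi$ inner, not merely asymptotically inner, which of course still yields the stated implication since inner automorphisms are asymptotically inner) and avoids the external perturbation results entirely; the paper's quantitative approach is less sharp here but is the kind of estimate that survives when one only has approximate invariance. Your indexing conventions for Laca's dilation are consistent ($\iota_m=\iota_{m+1}\circ\psi$, $\bar\psi\circ\iota_m=\iota_m\circ\psi=\iota_{m-1}$, each $\iota_m$ isometric by injectivity of $\psi$), and your treatment of the ``in particular'' clause via the cofinal subsystem indexed by multiples of $k$ matches the paper's.
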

\begin{proof}
By the standard construction of the inductive limit, $B$ arises as the closure of the $*$-algebra
\[
\set{[(a,\psi(a),\psi^2(a),\ldots)_{n \geq \ell}] \in A_\infty \mid a \in A,\ \ell \geq 1} \ \subset \ A_\infty,
\]
and on this dense subset the $*$-endomorphism $\bar{\psi}$ coincides with $\psi_\infty$.
Here the notation indicates that we only consider the tail of a representing sequence, which makes sense by definition of $A_\infty$. 

Suppose $\bar{\psi}=\ad(v)$ for some unitary $v\in B$. By the definition of $B$, we can find a unitary\footnote{Note that the dense $*$-subalgebra of $B$ is closed under functional calculus.} $u \in A$ and $\ell \geq 1$ such that $\|v-\bar{u}\|<1/2$, where 
\[
\bar{u} = [(u,\psi(u),\psi^2(u),\ldots)_{n \geq \ell}] \in B.
\]
Without loss of generality, let us assume $\ell=1$ here.
Let $a \in A$ with $\|a\|\leq 1$ and write $\bar{a} = [(\psi^{n-1}(a))_{n \geq 1}] \in A_\infty$.
We use that $\psi$ is injective and calculate
\[
\renewcommand\arraystretch{1.25}
\begin{array}{lcl}
\| \psi(a) - uau^* \| & = & \limsup\limits_{n \to \infty} \| \psi^n(\psi(a) - uau^*) \| \\
 & = & \limsup\limits_{n \to \infty} \| \psi(\psi^n(a)) - \psi^n(u) \psi^n(a) \psi^n(u)^* \| \\
 & = & \|\bar{\psi}(\bar{a})-\ad(\bar{u})(\bar{a})\| \\
 & \leq & 2\|v-\bar{u}\| \ < \ 1.
 \end{array}
\]
Hence, $\ad(u^*) \circ \psi$ has distance less than one in the operator norm to the identity operator on $A$. It thus follows from \cite[Theorem~8.7.7]{Pedersen1979} and \cite[Lemma~2.14 with proof]{DadarlatEilers2001} that $\psi$ is an asymptotically inner $*$-automorphism.

If $k \geq 1$ and ${\bar{\psi}}^k$ is inner, it now follows from the canonical isomorphism
\[
(B,{\bar{\psi}}^k) \cong \lim_{\longrightarrow} \set{(A,\psi^k),\psi^k}
\]
that $\psi^k$ is an asymptotically inner $*$-automorphism.
\end{proof}

\begin{rem} \label{rem:shift}
Let $A$ be an $n \times n$-matrix with entries in $\set{0,1}$ and no zero rows or columns. There exists a canonical unital completely positive map $\phi:\CO_A \to \CO_A$  given by $\phi(x) = \sum_{i =1}^n s_i x s_i^*$. Furthermore, it restricts to a $*$-endomorphism of $\CO_A \cap \set{q_i \mid 1 \leq i \leq n}'$. It is injective, as one finds for any $0\neq x \in \CO_A \cap \set{q_i \mid 1 \leq i \leq n}'$ some $j \in \set{1,\ldots,n}$ such that 
\[
0 \neq xq_j = q_jxq_j,
\]
and consequently $0 \neq s_jx s_j^* = p_j \phi(x)$.
\end{rem}

\begin{lemma} \label{simplicity dilated system}
Let $n \geq 2$. 
Let $A$ be an aperiodic $n \times n$-matrix with entries in $\set{0,1}$ and no zero rows or columns. 
Let $C \subset \mathrm \CO_A$ be a simple \cstar-subalgebra containing $\set{q_i \mid 1\leq i \leq n}$.
Suppose that $\phi(C) \subset C$ and that the restricted $*$-endomorphism
\[
\phi:C \cap \set{q_i \mid 1\leq i \leq n}' \to C \cap \set{q_i \mid 1\leq i \leq n}'
\]
is not surjective. Denote its automorphic dilation by
\[
(B, \bar{\phi}) = \lim_{\longrightarrow} \set{(C \cap \set{q_i \mid 1\leq i \leq n}',\phi),\phi}.
\]
Then $B$ is a unital simple \cstar-algebra and $\bar{\phi}$ an aperiodic automorphism.
\end{lemma}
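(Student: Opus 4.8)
The plan is to prove the two assertions separately. I would dispatch the aperiodicity of $\bar\phi$ first, as it follows quickly from Proposition~\ref{outerness dilated auto}. Set $\psi:=\phi|_{C\cap\set{q_i\mid 1\leq i\leq n}'}$. By Remark~\ref{rem:shift} this is an injective unital $*$-endomorphism of the separable unital \cstar-algebra $D:=C\cap\set{q_i\mid 1\leq i\leq n}'$, and by hypothesis it is not surjective. The key point is that $\psi^k$ then fails to be surjective for every $k\geq 1$: since $\psi$ is isometric its image $\psi(D)$ is a proper closed subalgebra of $D$, and $\operatorname{im}(\psi^k)=\psi\big(\operatorname{im}(\psi^{k-1})\big)\subseteq\psi(D)\subsetneq D$. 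In particular $\psi^k$ is not a $*$-automorphism, hence a fortiori not an asymptotically inner $*$-automorphism in the sense of the footnote to Proposition~\ref{outerness dilated auto}. That proposition then gives that ${\bar\phi}^{\,k}={\bar\psi}^{\,k}$ is outer for every $k\geq 1$; as the inverse of an outer automorphism is again outer, ${\bar\phi}^{\,k}$ is outer for all $k\neq 0$, so $\bar\phi$ is aperiodic.

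The heart of the simplicity statement is a single structural observation: the shift pushes all of $\CO_A$ into the relative commutant of the domain projections. Indeed, relation~\ref{label:ck:2} gives $q_i s_j=A(i,j)\,s_j$ for all $i,j$, whence for every $x\in\CO_A$
\[
q_i\,\phi(x)\ =\ \sum_j A(i,j)\,s_j x s_j^*\ =\ \phi(x)\,q_i,
\]
so that $\phi(\CO_A)\subseteq\set{q_i\mid 1\leq i\leq n}'$. Together with the hypothesis $\phi(C)\subseteq C$ this yields the crucial containment $\phi(C)\subseteq D$. The effect is that $\phi$ maps all of $C$, not merely $D$, into $D$, and this is precisely what will allow me to turn fullness in the \emph{simple} algebra $C$ into fullness in the a priori non-simple algebra $D$.

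I would then prove simplicity of $B$ along the standard correspondence between ideals of an inductive limit and compatible systems of ideals. Writing $j_\ell:D\to B$ for the canonical maps, so that $j_{\ell-1}=j_\ell\circ\phi$ and $B=\overline{\bigcup_\ell j_\ell(D)}$, every ideal $I\trianglelefteq B$ is determined by $J_\ell:=j_\ell^{-1}(I)\trianglelefteq D$, which satisfy $J_\ell=\phi^{-1}(J_{\ell+1})$; moreover $I\neq 0$ forces $J_\ell\neq 0$ for some $\ell$. Fix such an $\ell$ and a nonzero $x\in J_\ell$. Since $C$ is simple and $0\neq x\in C$, the closed ideal it generates in $C$ is all of $C$, so there are $a_t,b_t\in C$ with $\big\|1-\sum_t a_t x b_t\big\|<1$. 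Applying the $*$-homomorphism $\phi$ and using $\phi(1)=1$ gives $\big\|1-\sum_t \phi(a_t)\,\phi(x)\,\phi(b_t)\big\|<1$, where now $\phi(a_t),\phi(b_t)\in D$ by the crucial containment and $\phi(x)\in J_{\ell+1}$ because $x\in J_\ell=\phi^{-1}(J_{\ell+1})$. Thus the closed ideal $J_{\ell+1}$ of the unital algebra $D$ contains an element within distance $<1$ of $1$, which is therefore invertible; hence $1\in J_{\ell+1}$, i.e.\ $J_{\ell+1}=D$. It follows that $j_{\ell+1}(D)=j_{\ell+1}(J_{\ell+1})\subseteq I$, so $1_B\in I$ and $I=B$. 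This shows $B$ is simple, and $B$ is unital since $\phi$ and each $j_\ell$ are unital.

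The step I expect to be the true obstacle — and the one I would guard most carefully — is the containment $\phi(C)\subseteq D$: without it, simplicity of $C$ controls only ideals of $C$ and tells us nothing about ideals of $D$, whose centre contains the projections $q_i$ and is typically nontrivial, so the argument would break down completely. Everything else (injectivity and non-surjectivity of $\phi$ on $D$, the ideal correspondence for inductive limits, and the elementary observation that a closed ideal containing an element close to $1$ is everything) is routine. I would also remark that, somewhat surprisingly, aperiodicity of the matrix $A$ plays no role in either conclusion of this lemma as stated; the operative hypotheses are the simplicity of $C$ and the non-surjectivity of the restricted shift.
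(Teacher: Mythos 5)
Your treatment of the aperiodicity of $\bar\phi$ is fine and matches the paper's (both reduce to Proposition~\ref{outerness dilated auto} after noting that a non-surjective isometric endomorphism has non-surjective powers). The simplicity argument, however, has a genuine gap at the step ``Applying the $*$-homomorphism $\phi$ \dots gives $\|1-\sum_t\phi(a_t)\phi(x)\phi(b_t)\|<1$.'' The map $\phi(x)=\sum_i s_ixs_i^*$ is only a unital completely positive map on $C$; it is multiplicative on a product $uv$ only when one of the factors commutes with all the $q_i$ (Remark~\ref{rem:shift} states it restricts to a $*$-homomorphism on the relative commutant, not on all of $\CO_A$). Since your $a_t,b_t$ are arbitrary elements of the simple algebra $C$, one has $\phi\big(\sum_t a_txb_t\big)=\sum_{t,i}(s_ia_t)\,x\,(b_ts_i^*)\neq\sum_t\phi(a_t)\phi(x)\phi(b_t)$ in general. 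Contractivity does give $\big\|1-\phi\big(\sum_t a_txb_t\big)\big\|<1$, but the resulting invertible element of $D=C\cap\set{q_i\mid 1\leq i\leq n}'$ has ``coefficients'' $s_ia_t,\ b_ts_i^*$ lying outside $D$, so there is no reason for it to belong to the ideal $J_{\ell+1}\trianglelefteq D$. Fullness of $x$ in $C$ therefore does not transfer to fullness of $\phi(x)$ in $D$ by this route.

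The gap is not cosmetic: your closing remark that aperiodicity of $A$ plays no role is false, and this is exactly where your argument must (and does) break. Take $A$ irreducible of period $2$, e.g.\ the $4\times 4$ block matrix $\left(\begin{smallmatrix}0&E\\E&0\end{smallmatrix}\right)$ with $E$ the $2\times2$ matrix of ones, and $C=\CO_A$ (simple, as $A$ is irreducible and not a permutation). Then $\cstar(\set{q_i})$ has two minimal projections $r_1=p_1+p_2$, $r_2=p_3+p_4$, so $D=r_1Cr_1\oplus r_2Cr_2$, and one checks from $s_iq_i=s_i$ that $\phi$ swaps the two summands. Hence $\phi^m(x)$ is never full in $D$ for $x$ supported in one summand, the alternating ideals $J_{2k}=r_1Cr_1\oplus 0$, $J_{2k+1}=0\oplus r_2Cr_2$ form a compatible nontrivial system, and $B$ is \emph{not} simple. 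The paper's proof works precisely with the decomposition $D=\bigoplus_j r_jCr_j$ (whose corners are simple, so ideals of $D$ are sums of corners) and uses aperiodicity of $A$ to produce, for every nonzero $x\in D$ and every $j$, a path $\mu\in W^m$ witnessing $r_j\phi^m(r_ixr_i)r_j\neq0$ for all large $m$; only then is $\phi^m(x)$ full in $D$ and the ideal system forced to be trivial. You need this corner-by-corner analysis (or something equivalent using aperiodicity); the attempt to shortcut it through simplicity of $C$ alone cannot succeed.
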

\begin{proof}
Observe that $\cstar(\set{q_i \mid 1\leq i \leq n}) \subset C$ is a commutative \cstar-sub\-algebra containing the unit of $\CO_A$. Hence, $C$ is unital and by Remark~\ref{rem:shift} it follows that
\[
\phi:C \cap \set{q_i \mid 1\leq i \leq n}' \to C \cap \set{q_i \mid 1\leq i \leq n}'
\]
is unital and injective, yielding that $B$ is unital as well. Moreover, aperiodicity of $\bar{\phi}$ follows immediately from Proposition~\ref{outerness dilated auto}.

It remains to show that $B$ is simple. Using Notation~\ref{nota: s_mu}, one computes for $k \geq 1$ and $x \in \CO_A$, that 
\[
\phi^k(x) = \sum\limits_{\nu \in W^k} s_\nu x s_\nu^*.  
\]
Let $\set{r_i \mid 1 \leq i \leq m}$ be the set of minimal projections in $\cstar(\set{q_i \mid 1\leq i \leq n})$. One has that
\[
C \cap \set{q_i \mid 1\leq i \leq n}' = \bigoplus_{i = 1}^m r_iCr_i.
\]
Now let $x \in C \cap \set{q_i \mid 1\leq i \leq n}'$ be a non-zero element.
Find $i \in \set{1,\ldots,m}$ such that $0 \neq xr_i = r_ixr_i$. 
For any $k \geq 1$ and $j \in \set{1,\ldots,m}$ we have that
\begin{equation} \label{eq:shift^k}
r_j \phi^k(r_ixr_i)r_j = \sum_{ \nu \in W^k }  r_j s_\nu r_i x r_i s_\nu^* r_j 
= \sum_{ \nu \in W_{j,i}^k }  s_\nu x s_\nu^*,
\end{equation}
where
\[
W_{j,i}^k = \set{\nu \in W^k \mid p_{\nu_1} \leq r_j,\ r_i \leq q_{\nu_k}}.
\]
Find $s,t \in \set{1,\ldots,n}$ such that $p_s \leq r_j$ and $r_i \leq q_t$.
As $A$ is aperiodic, there exists some $m_0 \geq 1$ such that $A^m(u,v) > 0$ for all $u,v \in \set{1,\ldots,n}$ and $m \geq m_0$.
In particular, we find for each $m \geq m_0$ some $\mu \in W^m$ with $\mu_1 = s$ and $\mu_m = t$.
It now follows from \eqref{eq:shift^k} that $0 \neq r_j\phi^m(r_ixr_i)r_j \in r_jCr_j $ for $m \geq m_0$, as
\[
s_\mu(r_j \phi^m(r_ixr_i)r_j)s_\mu^* = \sum_{\nu \in W_{j,i}^m }  s^*_\mu s_\nu x s_\nu^*s_\mu = s_\mu^* s_\mu x s_\mu^* s_\mu = q_t x q_t \neq 0.
\]
As $C$ is simple, $r_j\phi^m(r_ixr_i)r_j \in r_jCr_j$ is a full element for all $j\in \set{1,\ldots,n}$ and $m \geq m_0$. 
Hence, $\phi^m(x)$ is full in $C \cap \set{q_i \mid 1\leq i \leq n}'$ for any non-zero $x \in C \cap \set{q_i \mid 1\leq i \leq n}'$ and $m \geq m_0$. 
We conclude that $B$ is simple. This finishes the proof. 
\end{proof}

Next comes the main technical result of this paper. Before we state it, we need the following definition due to Izumi.

\begin{defi}[see {\cite[Definition~3.6]{Izumi2004}}]
Let $G$ be a finite abelian group. An action $\alpha: G \curvearrowright A$ on a separable, unital \cstar-algebra is said to be strongly approximately inner if for each $g \in G$ there exists a sequence of unitaries $\set{u_{g,n} \mid n \in \IN} \subset \CU(A^\alpha)$ such that $\alpha_g = \lim_{n \to \infty} \ad(u_{g,n})$.
\end{defi}

\begin{theorem} \label{main theorem}
Let $n \geq 2$.
Let $A$ be an aperiodic $n \times n$-matrix with values in $\set{0,1}$.
Let $G$ be a finite abelian group and let $\sigma: G \curvearrowright \CO_A$ be an action such that $\sigma_g(q_i) = q_i$ for all $g \in G$ and $i \in \set{1,\ldots,n}$.
For $g \in G$, denote by $u_{\sigma_g} \in \CO_A \cap \set{q_i \mid 1 \leq i \leq n}'$ the unitary associated with $\sigma_g$.
Assume furthermore that 
\begin{enumerate}[label=\textup{\arabic*)}]
\item $\phi^k(u_{\sigma_g}) \in \CO_A^\sigma$ for all $g \in G$ and $k \geq 0$;
\item there exists a simple, nuclear \cstar-subalgebra $C \subset \CO_A^\sigma$ containing the set of domain projections $\set{q_i \mid 1 \leq i \leq n}$ such that $\phi(C) \subset C$ and the restricted $*$-endomorphism
\[
\phi:C \cap \set{q_i \mid 1 \leq i \leq n}' \to C \cap \set{q_i \mid 1 \leq i \leq n}'
\]
is not surjective.
\end{enumerate}
If $\sigma$ is outer, then it is strongly approximately inner.
\end{theorem}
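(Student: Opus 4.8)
The plan is to reduce strong approximate innerness to an approximate cohomology vanishing statement for the shift $\phi$, and then to solve the latter using the Rokhlin property of the dilated shift.

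First I would reformulate the goal via the correspondence of Proposition~\ref{prop:correspond graph alg endos}. For a unitary $v\in\CU(\CO_A)\cap\set{q_i\mid 1\le i\le n}'$ one computes $u_{\ad(v)}=\sum_{i=1}^n vs_iv^*s_i^*=v\phi(v)^*$, so $\ad(v)=\lambda_{v\phi(v)^*}$. Since the correspondence is bicontinuous (norm versus point-norm) and each $q_i$ lies in $\CO_A^\sigma$, proving the theorem amounts to showing that for every $g\in G$ there are unitaries $v_m\in\CU(\CO_A^\sigma)\cap\set{q_i\mid 1\le i\le n}'$ with $v_m\phi(v_m)^*\to u_{\sigma_g}$ in norm; equivalently, that each $u_{\sigma_g}$ is an approximate $\phi$-coboundary inside $\CO_A^\sigma\cap\set{q_i\mid 1\le i\le n}'$. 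As strong approximate innerness permits independent sequences, I would treat each $g$ separately.

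Next I would produce the Rokhlin data. Applying Lemma~\ref{simplicity dilated system} to the subalgebra $C$ of hypothesis 2, the automorphic dilation $(B,\bar\phi)$ of $\phi$ on $C\cap\set{q_i\mid 1\le i\le n}'$ is unital simple with $\bar\phi$ aperiodic; aperiodicity of $A$ together with outerness of $\sigma$ makes $B$ purely infinite (via \cite{Kishimoto1981,Jeong1995,TomsWinter2007}), hence a unital Kirchberg algebra, so by Nakamura's theorem \cite{Nakamura2000} the aperiodic automorphism $\bar\phi$ has the Rokhlin property. As in the construction in Proposition~\ref{outerness dilated auto}, $B$ embeds $\phi$-equivariantly into $(C\cap\set{q_i\mid 1\le i\le n}')_\infty\subseteq(\CO_A^\sigma)_\infty$ via the tails $[(\phi^k(a))_k]$, so the Rokhlin projections of $\bar\phi$ yield $\phi_\infty$-towers inside $(\CO_A^\sigma)_\infty$ whose levels are represented by elements of $C\cap\set{q_i\mid 1\le i\le n}'$.

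The heart of the argument, and the step I expect to be the main obstacle, is the cohomology vanishing itself. By hypothesis 1 the partial products $V_i=u_{\sigma_g}\phi(u_{\sigma_g})\cdots\phi^{i-1}(u_{\sigma_g})$ lie in $\CO_A^\sigma\cap\set{q_i\mid 1\le i\le n}'$, and $u_{\sigma_g}$ commutes with the embedded copy of $B$, since $\phi^k(C\cap\set{q_i\mid 1\le i\le n}')\subseteq\CO_A^\sigma$ is fixed by $\sigma_g$ for $k\ge1$ and hence commutes with $u_{\sigma_g}$. Given a height-$d$ Rokhlin tower $e_0,\dots,e_{d-1}$ cyclically permuted by $\phi_\infty$, the candidate $v=\sum_{i=0}^{d-1}V_ie_i$ (with $V_0=1$) satisfies, using these commutations, $v\phi(v)^*=u_{\sigma_g}+(V_d^*-1)u_{\sigma_g}e_0$, so the only defect is supported on the bottom level and is governed by the correction unitary $V_d$. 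The difficulty is that this defect need not be small on the nose; to kill it I would exploit that $u_{\sigma_g}$ lies in the connected component of $\CU(\CO_A^\sigma)$ --- after passing to the eigenvector normal form of Proposition~\ref{prop:charac quasi-free diagonal autos}, $u_{\sigma_g}=\sum_i\eta_{g,i}p_i$ is a scalar combination of the orthogonal projections $p_i$ and hence $K_1$-trivial --- together with the cocycle relation $\sigma_g^{N-1}(u_{\sigma_g})\cdots u_{\sigma_g}=1$ coming from the finite order $N$ of $g$, and the additional gauge freedom of Rokhlin towers of two consecutive heights. These should allow one to absorb $V_d$ and drive the defect to zero. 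A standard reindexing then converts the resulting solutions in $(\CO_A^\sigma)_\infty$ into honest sequences $v_m\in\CU(\CO_A^\sigma)\cap\set{q_i\mid 1\le i\le n}'$ with $v_m\phi(v_m)^*\to u_{\sigma_g}$, which by the first step finishes the proof.
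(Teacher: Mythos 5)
Your overall architecture coincides with the paper's: reduce to showing that $u_{\sigma_g}$ is an approximate $\phi$-coboundary in $\CO_A^\sigma\cap\set{q_i\mid 1\leq i\leq n}'$ via $\ad(v)=\lambda_{v\phi(v)^*}$, dilate $\phi$ on $C\cap\set{q_i\mid 1\leq i\leq n}'$ to an aperiodic automorphism $\bar\phi$ of a simple algebra, invoke Nakamura to get Rokhlin towers, and telescope. However, the two steps you flag as delicate are exactly where your argument is incomplete, and one of them is justified incorrectly. First, you claim that $B$ itself is purely infinite ``via Kishimoto, Jeong, Toms--Winter.'' Under the hypotheses of the theorem $C$ is only assumed simple and nuclear, and Lemma~\ref{simplicity dilated system} delivers only simplicity of $B$; those references do not apply to $B$ directly. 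The paper's fix is to use outerness of $\sigma$ to show that $\big((\CO_A)_\omega\cap\CO_A'\big)^{\sigma_\omega}$ is purely infinite simple, embed $\CO_\infty$ unitally there (after reindexing, into the commutant of the image of $B$ and of the relevant unitaries), and apply Nakamura to $\bar\phi\otimes\id$ on the Kirchberg algebra $B\otimes\CO_\infty$. Without this tensoring step your appeal to Nakamura's theorem is not licensed.

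Second, the cohomology vanishing itself is asserted rather than proved: ``these should allow one to absorb $V_d$ and drive the defect to zero'' is precisely the content that needs an argument, and the mechanism you propose ($K_1$-triviality of $u_{\sigma_g}$ plus unspecified gauge freedom) is both vaguer and weaker than what is required. Mere connectedness to $1$ gives no control on the length of the connecting path, nor does it guarantee that the path stays in the relative commutant of the tower projections, the $q_j$, and all $\phi_\omega^k$-translates --- without which the telescoping computation breaks down. The paper's actual mechanism is quantitative: since $g$ has finite order $N$ and $u_{\sigma_g}$ is fixed by $\sigma_g$, the convolution formula gives $u_{\sigma_g}^N=1$; the commutation relations $[\phi^k(u_{\sigma_g}),\phi^l(u_{\sigma_g})]=0$ then force $u_{g,k}^N=1$ for every $k$, so each partial product has finite spectrum and admits, by functional calculus \emph{inside the \cstar-algebra it generates}, a $2\pi$-Lipschitz path to $1$ that automatically inherits all needed commutation properties. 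Distributing the paths for $u_{g,r}$ and $u_{g,r+1}$ along the two towers of heights $r$ and $r+1$ yields a defect of at most $2\pi/r$, which is the estimate your sketch is missing. So the skeleton is right, but the proposal as written does not close either of the two gaps it correctly identifies as the crux.
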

\begin{proof}
Let $g \in G$. As $u_{\sigma_g}$ is fixed by $\sigma_g$ and $\sigma_g^n = \id$ for some $n \geq 1$, it follows from Remark~\ref{rem:conv formula} that
\[
u_{\sigma_g}^n = \sigma_g^{n-1}(u_{\sigma_g})\ldots\sigma_g(u_{\sigma_g})u_{\sigma_g} = 1.
\]
For $k \geq 0$, define unitaries $u_{g,k} \in \CO_A^\sigma \cap \set{q_i \mid 1 \leq i \leq n}'$ by
\[
u_{g,0} = 1,\ u_{g,1} = u_{\sigma_g} \text{ and } u_{g,k+1} = u_{g,\sigma}\phi(u_{g,\sigma}) \ldots \phi^k(u_{g,\sigma}).
\]
From the equality $\sigma_g \circ \phi = \ad(u_{\sigma_g}) \circ \phi \circ \sigma_g$, it follows that 
\[
\sigma_g \circ \phi^k = \ad(u_{g,k}) \circ \phi^k \circ \sigma_g \quad \text{for all } k \geq 0.
\]
If $x \in \CO_A^{\sigma_g}$ and $\phi^k(x)$ are both fixed by $\sigma_g$, then this yields $[\phi^k(x),u_{g,k}] = 0$.
Using $\phi(C) \subset C\subset\CO_A^\sigma$ this shows that for $x \in C$ and $0 \leq i \leq k$,
\[
[\phi^k(x),u_{g,i}] = [\phi^i(\phi^{k-i}(x)),u_{g,i}] = 0.
\]
Thus, as $u_{g,i}^*u_{g,i + j} = \phi^i(u_{g,j})$ for all $i,j \in \IN$, it follows that
\[
\phi^k(x) \in C \cap \set{\phi^i(u_{g,j}),\ q_l \mid g \in G,\ 0 \leq j \leq k,\ 0 \leq i \leq k - j,\ 1 \leq l \leq n}'
\]
for all $x \in C \cap \set{q_i \mid 1 \leq i \leq n}'$ and $k \geq 1$.
In particular, $[\phi^k(u_{\sigma_g}),\phi^l(u_{\sigma_g})]= 0$ for all $k,l \geq 0$.
Hence, $u_{g,k}^n = 1$ for all $k \geq 0$ as $u_{\sigma_g}^n = 1$. Thus, each scalar in the spectrum of $u_{g,k}$ is an $n$-th root of unity.

Consider the automorphic dilation of $\phi$ on $C \cap \set{q_i \mid 1 \leq i \leq n}'$
\[
(B, \bar{\phi}) = \lim_{\longrightarrow} \set{(C \cap \set{q_i \mid 1 \leq i \leq n}',\phi),\phi}.
\]
By Lemma \ref{simplicity dilated system}, $B$ is a unital, simple, nuclear \cstar-algebra and $\bar{\phi}$ an aperiodic automorphism on $B$. 
By the standard construction of the inductive limit, there exists a unital, injective and equivariant $*$-homomorphism
\[
\tilde{\rho}:(B,\bar{\phi}) \to \big( (C \cap \set{q_i \mid 1 \leq i \leq n}')_\infty,\phi_\infty \big)
\]
with the image being the closure of 
\[
\set{[(x,\phi(x),\phi^2(x),\ldots)_{n \geq k}] \in C_\infty \mid x \in C \cap \set{q_i \mid 1 \leq i \leq n}',\ k\geq 1}.
\]
Observe that
\[
\tilde{\rho}(B) \subset (\CO_A^\sigma)_\infty \cap \set{\phi^i(u_{g,j}),\ q_k \mid g \in G,\ i,j \geq 0,\ 1\leq k \leq n}'.
\]

Let $\omega$ be a free ultrafilter on $\IN$. 
As $\CO_A$ is a unital Kirchberg algebra by Theorem~\ref{thm: Kirchberg aper mat}, it follows from \cite[Proposition~3.4]{KirchbergPhillips2000} that $(\CO_A)_\omega \cap \CO_A'$ is unital, simple and purely infinite. 
As $\sigma$ is a pointwise outer action, so is $\sigma_\omega$; see \cite[proof of Lemma~2]{Nakamura2000}. Hence, the fixed point algebra $((\CO_A)_\omega \cap \CO_A')^{\sigma_\omega}$ is unital, simple and purely infinite by \cite[Theorem 3.1]{Kishimoto1981} and \cite[Theorem~3]{Jeong1995}, and therefore admits a unital embedding of $\CO_\infty$. Note that $\phi_\omega$ acts trivially on $(\CO_A)_\omega \cap \CO_A'$. 
A reindexation argument now shows the existence of a unital embedding 
\[
\iota:\CO_\infty \to \big( (\CO_A)_\omega \cap (\rho(B)\cup \CO_A)' \big)^{\sigma_\omega},
\]
where $\rho(B) \subset (\CO_A)_\omega$ denotes the image of $\tilde{\rho}(B)$ under the canonical surjection $(\CO_A)_\infty \to (\CO_A)_\omega$.
Set $D = \mathrm C^*(\rho(B),\iota(\CO_\infty))$, which is a $\phi_\omega$-invariant \cstar-subalgebra of 
\[
(\CO_A^\sigma)_\omega \cap \set{\phi^i(u_{g,j}),\ q_k \mid  g\in G,\ i,j \geq 0,\ 1 \leq k \leq n}'
\]
containing the unit of $(\CO_A)_\omega$. In fact, the assignment $b\otimes c\mapsto \rho(b)\cdot\iota(c)$ yields an equivariant $*$-isomorphism
\[
(B \otimes \CO_\infty, \bar{\phi} \otimes \id) \to (D,\phi_\omega).
\]

Now $B \otimes \CO_\infty$ is a unital Kirchberg algebra and $\bar{\phi} \otimes \id$ is an aperiodic automorphism, so that $\bar{\phi} \otimes \id$ has the Rokhlin property by \cite[Theorem~1]{Nakamura2000}.
For given $r \geq 1$, it is thus possible to find projections $e_0,f_0,\ldots,e_{r-1},f_{r-1},f_r \in D$ such that
\begin{equation}\label{eq:Rok:1}
\sum\limits_{i=1}^{r-1}e_i + \sum\limits_{j=1}^r f_j = 1;
\end{equation}
\begin{equation} \label{eq:Rok:2}
\begin{split}
\phi_\omega(e_i) \approx e_{i+1} \text{ and } \phi_\omega(f_j) \approx f_{j+1} \text{ for all } i=0,\ldots,r-1 \\ \text{ and } j=1,\ldots,r,
\text{ where we set  }e_r = e_0 \text{ and } f_{r+1} = f_0.
\end{split}
\end{equation}
By a reindexation trick, we may actually find such elements in
\[
(\CO_A^\sigma)_\omega \cap \set{\phi^i(u_{g,j}),\ q_k \mid g \in G,\ i,j \geq 0,\ 1 \leq k \leq n}'
\]
satisfying the second property \eqref{eq:Rok:2} on the nose.

Now fix $g \in G$. As $u_{g,r}$ and $u_{g,r+1}$ have finite spectrum, we find continuous maps
\[
z_g^{(i)}:[0,1] \to \CU\Big( (\CO_A^\sigma)_\omega \cap \set{e_0,f_0,\ldots,e_{r-1},f_{r-1},f_r,q_j \mid 1 \leq j \leq n}' \Big)
\]
for $i=0,1$ such that
\begin{equation} \label{eq:longPath:1}
z_g^{(0)}(0) = u_{g,r},\ z_g^{(1)}(0) = u_{g,r+1} \text{ and } z_g^{(0)}(1) = z_g^{(1)}(1) = 1;
\end{equation}
\begin{equation} \label{eq:longPath:2}
\begin{split}
\phi_\omega^k(z_g^{(i)}(s)) \in (\CO_A^\sigma)_\omega \cap \set{e_0,f_0,\ldots,e_{r-1},f_{r-1},f_r,q_j \mid 1 \leq j \leq n}' \\
\text{ for all } s \in [0,1],\  i=0,1 \text{ and } k \geq 0;
\end{split}
\end{equation}
\begin{equation} \label{eq:longPath:3}
\| z_g^{(i)}(s) - z_g^{(i)}(t) \| \leq 2\pi|s - t| \text{ for all } s,t \in [0,1] \text{ and } i=0,1.
\end{equation}
Define unitaries 
\[
z^{(i)}_{g,j_i} = z_g^{(i)}(j_i/(r+i)) \quad \text{for }0 \leq j_i \leq r - 1 + i,\ i = 0,1. 
\]
Then it follows from \eqref{eq:longPath:3} that 
\begin{equation} \label{eq:longPath:4}
\| z^{(i)}_{g,j_i} - z^{(i)}_{g,j_i + 1} \| \leq 2\pi/r \text{ for all } 0 \leq j_i \leq r - 2 + i \text{ and } i = 0,1.
\end{equation}
Define a unitary $z_g \in \CU\Big( (\CO_A^\sigma)_\omega \cap \set{q_i \mid 1 \leq i \leq n}' \Big)$ by
\[
z_g = \sum_{k = 0}^{r - 1} u_{g,k} \phi_\omega^k (z^{(0)}_{g,k}) e_k + \sum_{l = 0}^r u_{g,l} \phi_\omega^l (z^{(1)}_{g,l}) f_l.
\]
One computes that
\[
\renewcommand\arraystretch{1.5}
\begin{array}{cl}
\multicolumn{2}{l}{ z_g\phi_\omega (z_g)^* } \\
\stackrel{\eqref{eq:Rok:1},\eqref{eq:Rok:2},\eqref{eq:longPath:1}}{=} & u_{g,r} e_0\phi_\omega(e_{r-1})\phi(u_{g,r-1})^* + u_{g,r+1} f_0\phi_\omega(f_r)\phi(u_{g,r})^* \\
& \dst + \sum_{k=1}^{r-1} u_{g,k} \phi_\omega^k (z^{(0)}_{g,k}) e_k \phi_\omega(e_{k-1}) \phi_\omega^k(z^{(0)}_{g,k-1})^*\phi(u_{g,k-1})^* \\
& \dst + \sum_{l=1}^{r} u_{g,l} \phi_\omega^l (z^{(1)}_{g,l}) f_l \phi_\omega(f_{l-1}) \phi_\omega^l(z^{(1)}_{g,l-1})^*\phi(u_{g,l-1})^* \\
\stackrel{\eqref{eq:Rok:2},\eqref{eq:longPath:2}}{=} & u_{g,r} \phi(u_{r-1})^* e_0 + u_{g,r+1} \phi(u_{g,r})^* f_0 \\
 & \dst + \sum_{k=1}^{r-1} u_{g,k} \phi_\omega^k (z^{(0)}_{g,k}) \phi_\omega^k(z^{(0)}_{g,k-1})^*\phi(u_{g,k-1})^* e_k \\
 & \dst + \sum_{l=1}^{r} u_{g,l} \phi_\omega^l (z^{(1)}_{g,l}) \phi_\omega^l(z^{(1)}_{g,l-1})^*\phi(u_{g,l-1})^* f_l \\
\stackrel{\eqref{eq:longPath:4}}{=}_{\makebox[0pt]{\footnotesize \hspace{2mm} $2\pi/r$}} & u_{g,r}\phi(u_{g,r-1})^*e_0 + u_{g,r+1}\phi(u_{g,r})^*f_0 \\
 & \dst + \sum_{k=1}^{r-1} u_{g,k} \phi(u_{g,k-1})^*e_k \\
 & \dst + \sum_{l=1}^{r} u_{g,l} \phi(u_{g,l-1})^*f_l \\
= & \dst \sum_{k=0}^{r-1} u_{\sigma_g} e_k + \sum_{l=0}^r u_{\sigma_g} f_l \\
\stackrel{\eqref{eq:Rok:1}}{=} & u_{\sigma_g}.
\end{array}
\]
Now let $\eps>0$ be given.
By choosing $r \in \IN$ so that $2\pi/r < \eps$, we may represent $z_g$ via unitaries and thus find a unitary 
\[
w_g \in \CO_A^\sigma \cap \set{q_i \mid 1 \leq i \leq n}' \quad\text{with}\quad \|u_{\sigma_g} - w_g\phi(w_g)^* \| \leq \eps.
\]
It is easy to check that $\ad(w_g) = \lambda_{w_g\phi(w_g)^*}$, so that
\[
\|\sigma(s_i) - \ad(w_g)(s_i)\| \leq \eps \quad \text{for all } i \in \set{1,\ldots,n}.
\]
This yields a sequence of unitaries $\set{w_{g,n}}_{n\in\IN} \subset \CO_A^\sigma \cap \set{q_i \mid 1 \leq i \leq n}'$ such that $\sigma_g = \lim_{n \to \infty} \ad(w_{g,n})$.
The proof is complete.
\end{proof}

\begin{cor} \label{cor:main result}
Let $n \geq 2$. 
Let $A$ be an aperiodic $n \times n$-matrix with values in $\set{0,1}$.
Let $G$ be a finite abelian group and $\sigma:G \curvearrowright \CO_A$ an action by diagonal quasi-free automorphisms.
If $\sigma$ is outer, then it is strongly approximately inner.
\end{cor}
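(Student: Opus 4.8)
The plan is to deduce the statement from Theorem~\ref{main theorem} by first putting $\sigma$ into a convenient normal form and then checking its two hypotheses. First I would invoke Proposition~\ref{prop:charac quasi-free diagonal autos}\ref{label:cqf:2} to replace $\sigma$ by a conjugate action for which the canonical generators are eigenvectors, say $\sigma_g(s_i) = \eta_{g,i}s_i$ with $\eta_{g,i} \in \IT$. Since conjugacy preserves both outerness and strong approximate innerness, there is no loss in assuming $\sigma$ has this form from the outset. In particular $\sigma_g(q_i) = q_i$ holds, so the running assumption of Theorem~\ref{main theorem} is satisfied, and one computes $u_{\sigma_g} = \sum_{i=1}^n \sigma_g(s_i) s_i^* = \sum_{i=1}^n \eta_{g,i}p_i$, which lies in the diagonal $\linhull\set{p_i \mid 1 \leq i \leq n} \subseteq \CO_A^\sigma$.

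A short computation shows that $\phi$ commutes with every $\sigma_g$ in this eigenvector form, so that $\phi(\CO_A^\sigma) \subseteq \CO_A^\sigma$. As $u_{\sigma_g} \in \CO_A^\sigma$, this immediately gives $\phi^k(u_{\sigma_g}) \in \CO_A^\sigma$ for all $k \geq 0$, which is the first hypothesis of Theorem~\ref{main theorem}. For the second hypothesis I would take $C = \CO_A^\sigma$. It contains each $q_i$ and is $\phi$-invariant by the above. Moreover, as $\sigma$ is outer and $\CO_A$ is a Kirchberg algebra by Theorem~\ref{thm: Kirchberg aper mat}, the fixed point algebra $\CO_A^\sigma$ is simple and nuclear; this follows from \cite{Kishimoto1981} and \cite{Jeong1995} together with the fact that $\CO_A^\sigma$ is isomorphic to a corner of the simple, nuclear crossed product $\CO_A \rtimes_\sigma G$.

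The decisive point, and the main obstacle, is to verify that the restriction of $\phi$ to $C \cap \set{q_i \mid 1 \leq i \leq n}'$ is not surjective; this is exactly where aperiodicity and $n \geq 2$ are used. Starting from the identity $\phi(x)s_i = s_i x$, valid for $x \in \set{q_i \mid 1 \leq i \leq n}'$, one checks inductively that $\phi^m(x)s_\mu = s_\mu x$ for all $\mu \in W^m$, and consequently that $\phi^m(x)$ commutes with every $s_\mu s_\nu^*$ for $\mu, \nu \in W^m$. Thus the image of the restriction of $\phi^m$ to $\set{q_i \mid 1 \leq i \leq n}'$ is contained in the commutant of $B_m = \linhull\set{s_\mu s_\nu^* \mid \mu, \nu \in W^m}$. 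On the other hand, $p_1 = s_1 s_1^*$ belongs to $C \cap \set{q_i \mid 1 \leq i \leq n}'$, being $\sigma$-invariant and commuting with the diagonal. Choosing $m$ large enough that $A^{m-1}$ has strictly positive entries, aperiodicity furnishes paths $\mu, \nu \in W^m$ with $\mu_1 = 1$, $\nu_1 = 2$ and a common endpoint $\mu_m = \nu_m$; then $s_\mu s_\nu^* \neq 0$, whereas $[p_1, s_\mu s_\nu^*] = s_\mu s_\nu^* \neq 0$. Hence $p_1$ does not lie in the commutant of $B_m$, so $p_1$ is not in the image of $\phi^m$, and therefore neither $\phi^m$ nor $\phi$ is surjective on $C \cap \set{q_i \mid 1 \leq i \leq n}'$.

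With both hypotheses in place, Theorem~\ref{main theorem} applies and shows that the outer action $\sigma$ is strongly approximately inner, which completes the argument. Essentially all of the work is concentrated in the non-surjectivity step; the remaining verifications amount to unwinding the eigenvector normal form and recalling standard permanence properties of fixed point algebras of outer finite group actions.
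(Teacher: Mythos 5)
Your proposal is correct and follows the paper's overall strategy exactly: normalize via Proposition~\ref{prop:charac quasi-free diagonal autos}\ref{label:cqf:2}, observe $u_{\sigma_g}=\sum_i\eta_{g,i}p_i$, check that $\phi$ commutes with $\sigma$, take $C=\CO_A^\sigma$ (simple, nuclear, unital by outerness), and reduce everything to Theorem~\ref{main theorem}. The one place where you genuinely deviate is the non-surjectivity of $\phi$ on $C\cap\set{q_i\mid 1\leq i\leq n}'$, which both you and the paper correctly identify as the only nontrivial verification. The paper argues by contradiction from the inside: if $\phi(x_i)=p_i$, then compressing by $q_j$ forces $x_i=q_i$, whence $\phi\left(\sum_i q_i\right)=1$ and injectivity of $\phi$ gives $\sum_i q_i=1$; this makes $A$ a permutation matrix, and aperiodicity enters only to rule that out. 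You instead bound the image from the outside: the identity $\phi^m(x)s_\mu=s_\mu x$ places $\phi^m(\set{q_i}')$ inside the commutant of $\linhull\set{s_\mu s_\nu^*\mid\mu,\nu\in W^m}$, and aperiodicity produces two paths of length $m$ with distinct initial letters $1,2$ and common terminal letter, so that $p_1$ fails to commute with the corresponding $s_\mu s_\nu^*$ and hence lies outside the image of $\phi^m$. Both arguments are correct; yours uses aperiodicity more directly (and is where your hypothesis $n\geq 2$ visibly enters, to have the index $2$ available), while the paper's is shorter and isolates the obstruction as a purely combinatorial statement about the matrix. Your version has the mild additional payoff of exhibiting a concrete element, namely $p_1$, that is not in the image of any sufficiently high power of $\phi$.
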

\begin{proof}
By Proposition~\ref{prop:charac quasi-free diagonal autos}\ref{label:cqf:2}, we may assume that for each $g \in G$ and $i \in \set{1,\ldots,n}$, there exists some $\eta_{g,i} \in \IT$ such that $\sigma_g(s_i) = \eta_{g,i} s_i$.
Then
\[
u_{\sigma_g} = \sum_{i = 1}^n \eta_{g,i} p_i \ \in \ \CO_A^\sigma \cap \set{q_i \mid  1 \leq i \leq n}'.
\]
One checks that for each $g \in G$, $\sigma_g \circ \phi = \phi \circ \sigma_g$ so that $\phi(\CO_A^\sigma) \subset \CO_A^\sigma$. As $\sigma$ acts by diagonal quasi-free automorphisms, $\set{q_i \mid 1 \leq i \leq n}$ is contained in $\CO_A^\sigma$. Furthermore, as $\sigma$ is outer, $\CO_A^\sigma$ is a unital Kirchberg algebra. 
The assumptions of Theorem~\ref{main theorem} are thus satisfied and prove the claim, provided that 
\[
\phi: \CO_A^\sigma \cap \set{q_i \mid 1 \leq i \leq n}' \to \CO_A^\sigma \cap \set{q_i \mid 1 \leq i \leq n}'.
\]
is not surjective.

Suppose by way of contradiction that $\phi$ restricts to a surjective endomorphism of $\CO_A^\sigma \cap \set{q_i \mid 1 \leq i \leq n}'$. For $i \in \set{1,\ldots,n}$, find $x_i \in \CO_A^\sigma \cap \set{q_i \mid 1 \leq i \leq n}'$ such that $\phi(x_i) = p_i$. One computes that
\[
x_iq_i = q_ix_iq_i = s_i^* \phi(x_i) s_i = s_i^* p_i s_i = q_i. 
\]
A similar computation shows that  $x_i q_j = 0$ if $j \neq i$. 
Hence, $x_i = q_i$ and it follows that 
\[
\phi\left(\sum_{i=1}^n q_i \right) = \sum_{i = 1}^n p_i = 1.
\]
As $\phi$ is injective on $\CO_A^\sigma \cap \set{q_i \mid 1 \leq i \leq n}'$, we conclude that $\sum_{i=1}^n q_i = 1$. This in turn implies that $A$ is a permutation matrix, which contradicts the assumption that $A$ is aperiodic. The proof is complete.
\end{proof}

Our main result also applies to unital graph \cstar-algebras; see \cite{KumjianPaskRaeburn1998} for a definition. The conditions we have to impose on the finite graphs are strong connectedness and aperiodicity. Here, a graph is said to be strongly connected aperiodic if there is some $k \geq 1$ such that for any two of its edges $v,w$ there is some path of length $k$ from $v$ to $w$. The notion of quasi-free automorphisms transfers to the setting of unital graph \cstar-algebras in a straightforward manner. In particular, we call an automorphism of a unital graph \cstar-algebra diagonal quasi-free if it preserves the span of the canonical generating partial isometries and fixes all vertex projections.

\begin{cor} \label{cor:main result graphs}
Let $E$ be a finite graph that contains at least two edges.
Let $G$ be a finite abelian group and $\sigma:G \curvearrowright \cstar(E)$ be an action by diagonal quasi-free automorphisms on the associated graph \cstar-algebra.
Suppose that $E$ is strongly connected aperiodic.
If $\sigma$ is outer, then it is strongly approximately inner. 
\end{cor}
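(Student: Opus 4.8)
The plan is to deduce this from Corollary~\ref{cor:main result} by passing to the edge matrix of $E$, thereby realizing $\cstar(E)$ as an honest Cuntz-Krieger algebra. First I would recall the standard edge matrix construction. Enumerate the edges of $E$ as $e_1,\ldots,e_n$ (so $n \geq 2$ by hypothesis) and define an $n \times n$-matrix $A_E$ with entries in $\set{0,1}$ by declaring $A_E(i,j) = 1$ precisely when the range of $e_i$ equals the source of $e_j$, that is, when the concatenation $e_ie_j$ is an allowed path in $E$. It is classical that, for a finite graph with no sinks and no sources, the assignment $s_{e_i} \mapsto s_i$ extends to a $*$-isomorphism $\Phi:\cstar(E) \to \CO_{A_E}$ carrying the edge partial isometries to the canonical generators of $\CO_{A_E}$; one checks directly that the graph relations translate into the Cuntz-Krieger relations for $A_E$. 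Under this identification each domain projection $q_i = s_i^*s_i$ is exactly the vertex projection attached to the range of $e_i$.

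Next I would verify the hypotheses of Corollary~\ref{cor:main result}. Since $E$ is strongly connected aperiodic, there is some $k \geq 1$ such that for any two edges $v,w$ there exists a path of length $k$ from $v$ to $w$; in terms of the edge matrix this says precisely that $A_E^k(i,j) > 0$ for all $i,j$, so $A_E$ is aperiodic. In particular this forces $E$ to have no sinks and no sources, so that $A_E$ has no zero rows or columns and the isomorphism $\Phi$ above is legitimate. Moreover $\CO_{A_E}$ is then a unital Kirchberg algebra by Theorem~\ref{thm: Kirchberg aper mat}.

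It remains to transport the notion of a diagonal quasi-free automorphism across $\Phi$. By definition an automorphism of $\cstar(E)$ is diagonal quasi-free if it fixes every vertex projection and preserves $\linhull\set{s_{e_i} \mid 1 \leq i \leq n}$. The span condition is literally the same on both sides. For the projection condition, since $E$ has no sources every vertex occurs as the range of some edge, so the set of vertex projections coincides with $\set{q_i \mid 1 \leq i \leq n}$; hence fixing all vertex projections is equivalent to fixing all domain projections $q_i$, which is exactly the requirement in Definition~\ref{defi:quasi-free}. Consequently $\Phi$ conjugates $\sigma$ to an action of $G$ on $\CO_{A_E}$ by diagonal quasi-free automorphisms, and this action is outer precisely when $\sigma$ is. Applying Corollary~\ref{cor:main result} to $A_E$ shows that the transported action is strongly approximately inner, and pulling back along $\Phi^{-1}$ (which maps the fixed point algebra onto $\cstar(E)^\sigma$) yields the claim for $\sigma$.

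I expect the only point requiring genuine care to be the bookkeeping in this reduction: confirming that the graph-theoretic diagonal quasi-free condition matches Definition~\ref{defi:quasi-free} under the identification of vertex projections with the domain projections $q_i$, and that strong connectedness aperiodicity of $E$ is exactly aperiodicity of $A_E$. Everything beyond this translation is a direct appeal to the already established Cuntz-Krieger case.
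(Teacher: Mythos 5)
Your proposal is correct and follows essentially the same route as the paper: pass to the edge matrix $A_E$, use the canonical isomorphism $\cstar(E)\cong\CO_{A_E}$ intertwining diagonal quasi-free actions, observe that strong connectedness aperiodicity of $E$ gives aperiodicity of $A_E$, and invoke Corollary~\ref{cor:main result}. The extra bookkeeping you supply (identifying vertex projections with the domain projections $q_i$ via the no-sources condition) is exactly the detail the paper leaves implicit.
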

\begin{proof}
Denote by $E^1$ the set of edges of $E$. The edge matrix of $E$ is the $E^1 \times E^1$ matrix $A_E$ defined by
\[
A_E(e,f) = 
\begin{cases}
1 &,\ \text{if } r(e) = s(f), \\
0 &, \ \text{otherwise}.
\end{cases}
\]
The canonical partial isometries $\set{s_e \mid e \in E^1} \subset \cstar(E)$ define a Cuntz-Krieger family for $A_E$; see \cite[Section~1]{KumjianPaskRaeburn1998}. It follows that $\cstar(E)$ and $\CO_{A_E}$ are canonically isomorphic. In particular, this isomorphism intertwines (diagonal) quasi-free automorphisms. By Corollary~\ref{cor:main result}, it is thus enough to check that $A_E$ is aperiodic. However, this holds true as $A_E$ is the adjacency matrix of a graph, which in turn is strongly connected aperiodic since $E$ has this property.
\end{proof}

In combination with Izumi's classification theorem \cite[Theorem~4.6]{Izumi2004}, we obtain from Corollary~\ref{cor:main result} that outer actions of cyclic groups with prime power order by diagonal quasi-free automorphisms on Cuntz-Krieger algebras isomorphic to $\CO_2$ are classifiable in terms of their fixed point algebras and some additional information about their dual actions, provided the associated matrices are aperiodic. In the case of $\IZ_2$-actions, the latter information is not needed. We thus derive the following result.

\begin{cor} \label{cor:classification}
Let $m, n \geq 2$. 
Let $A$ be an $m \times m$-matrix and $B$ be an $n \times n$-matrix with entries in $\set{0,1}$ such that $\CO_A\cong\CO_B\cong\CO_2$ as abstract \cstar-algebras.
Let $\alpha:\IZ_2 \curvearrowright \CO_A$ and $\beta:\IZ_2 \curvearrowright \CO_B$ be two actions by diagonal quasi-free automorphisms.
Suppose that $A$ and $B$ are aperiodic and that both $\alpha$ and $\beta$ are outer.
Then $\alpha$ and $\beta$ are (cocycle) conjugate if and only if $\CO_A^\alpha$ and $\CO_B^\beta$ are (stably) isomorphic
\end{cor}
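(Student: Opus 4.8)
The plan is to combine Corollary~\ref{cor:main result} with Izumi's classification of strongly approximately inner actions of cyclic groups of prime power order on $\CO_2$, and then to show that for $\IZ_2$ the dual-action data entering Izumi's invariant is already determined by the fixed point algebra. First, since $\CO_A \cong \CO_B \cong \CO_2$, the matrices are aperiodic and both actions are outer, Corollary~\ref{cor:main result} shows that $\alpha$ and $\beta$ are strongly approximately inner actions of $\IZ_2$ on $\CO_2$. As $\IZ_2$ is cyclic of prime power order, we are precisely in the scope of \cite[Theorem~4.6]{Izumi2004}, which provides a complete invariant for such actions consisting of the (stable) isomorphism class of the fixed point algebra together with data recording the dual action. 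I also record that $\CO_A^\alpha$ and $\CO_B^\beta$ are unital UCT Kirchberg algebras, the Kirchberg property being part of the proof of Corollary~\ref{cor:main result}.

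The forward implications are the routine half. If $\alpha$ and $\beta$ are conjugate via an isomorphism $\theta:\CO_A \to \CO_B$ with $\theta \circ \alpha_g = \beta_g \circ \theta$, then $\theta$ restricts to an isomorphism $\CO_A^\alpha \cong \CO_B^\beta$. A cocycle conjugacy induces an isomorphism of crossed products $\CO_A \rtimes_\alpha \IZ_2 \cong \CO_B \rtimes_\beta \IZ_2$ intertwining the dual actions; since the actions are outer, these crossed products are simple, so each fixed point algebra, being a nonzero corner, is automatically full and hence Morita equivalent and, by separability, stably isomorphic to its crossed product. Chaining these identifications yields that $\CO_A^\alpha$ and $\CO_B^\beta$ are stably isomorphic.

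The substance of the statement lies in the reverse implications, where an isomorphism (resp.\ stable isomorphism) of fixed point algebras must be upgraded to a matching of the full Izumi invariant; this is the main obstacle. By Izumi's duality the action $\alpha$ corresponds to the dual Rokhlin action $\hat{\alpha}:\IZ_2 \curvearrowright \CO_A \rtimes_\alpha \IZ_2$, and Takai duality identifies $(\CO_A \rtimes_\alpha \IZ_2)\rtimes_{\hat{\alpha}} \IZ_2 \cong \CO_A \otimes M_2 \cong \CO_2$, while $(\CO_A \rtimes_\alpha \IZ_2)^{\hat{\alpha}} \cong \CO_A \cong \CO_2$. Invoking Kirchberg-Phillips classification \cite{Kirchberg, Phillips2000}, the remaining invariant reduces to the K-theoretic pair $(K_*(\CO_A \rtimes_\alpha \IZ_2), \hat{\alpha}_*)$ together with the class of the unit, and the Morita equivalence above identifies $K_*(\CO_A \rtimes_\alpha \IZ_2) \cong K_*(\CO_A^\alpha)$. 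The crux is that the induced involution $\hat{\alpha}_*$ carries no extra information for $\IZ_2$: the two constraints that the fixed point algebra and the double crossed product of $\hat{\alpha}$ have vanishing K-theory (being $\cong \CO_2$ resp.\ $\cong \CO_2 \otimes M_2$), fed into the K-theoretic relations governing the Rokhlin action $\hat{\alpha}$, pin down $\hat{\alpha}_*$ up to isomorphism in terms of $K_*(\CO_A^\alpha)$ alone. This is exactly where order two enters: for $\IZ_2$ there is effectively a single admissible involution compatible with these constraints, so no invariant survives beyond the fixed point algebra, whereas larger prime-power cyclic groups would retain genuine phase data in their duals. Assembling this, an isomorphism $\CO_A^\alpha \cong \CO_B^\beta$ (resp.\ a stable isomorphism) extends to an isomorphism of the Izumi invariants, and \cite[Theorem~4.6]{Izumi2004} then returns a conjugacy (resp.\ cocycle conjugacy) between $\alpha$ and $\beta$, completing the proof.
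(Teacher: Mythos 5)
Your proposal is correct and follows essentially the same route as the paper, whose proof is a one-line combination of Corollary~\ref{cor:main result}, Izumi's classification, and Kirchberg--Phillips. The only difference is that the paper invokes Izumi's Theorem~4.8 --- the $\IZ_2$-specific statement in which the fixed point algebra alone is already a complete invariant --- so the reduction you sketch from the general prime-power invariant (your ``crux'', which amounts to observing that the Rokhlin property of $\hat{\alpha}$ together with $K_*\big((\CO_A\rtimes_\alpha\IZ_2)^{\hat{\alpha}}\big)=K_*(\CO_A)=0$ forces $\hat{\alpha}_*=-\mathrm{id}$) is content already contained in the cited result rather than something you need to supply.
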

\begin{proof}
The claim follows by combining Corollary~\ref{cor:main result}, \cite[Theorem~4.8]{Izumi2004} and Kirchberg-Phillips classification \cite{Kirchberg,Phillips2000}.
\end{proof}

\bibliographystyle{plain}
\bibliography{quasifree}
\end{document}